  \theoremstyle{plain}
  \newtheorem{theorem}{Theorem}
  \newtheorem{lemma}{Lemma}
  \newtheorem{proposition}[theorem]{Proposition}
  \newtheorem{example}{Example}[section]
  \theoremstyle{remark}
  \newtheorem*{remark}{Remark}
\begin{document}

\title{\bf 
Solving Coupled Nonlinear Forward-backward Stochastic Differential Equations:
An Optimization Perspective with Backward Measurability Loss%
\thanks{This work is supported in part by the National Natural Science Foundation of China (62173191).}
}

\author{Yutian Wang%
\thanks{Department of Applied Mathematics, The Hong Kong Polytechnic University,
China. Email: {\tt yutian.wang@connect.\linebreak polyu.hk}}
\and Yuan-Hua Ni%
\thanks{College of Artificial Intelligence, Nankai University,
Tianjin, China. Email: {\tt yhni@nankai.edu.cn}.}
\and Xun Li%
\thanks{Department of Applied Mathematics, The Hong Kong Polytechnic University,
China. Email: {\tt li.xun@polyu.edu.hk}}
}

\date{\today}

\maketitle

\begin{abstract}
This paper aims to extend the BML method proposed in \citet{wang_ni_2022} to make it applicable to more general coupled nonlinear FBSDEs. We interpret BML from the fixed-point iteration perspective and show that optimizing BML is equivalent to minimizing the distance between two consecutive trial solutions in a fixed-point iteration. Thus, this paper provides a theoretical foundation for an optimization-based approach to solving FBSDEs. We also empirically evaluate the method through four numerical experiments. 

\textbf{Keywords:} forward-backward stochastic differential equation,
optimization-based solver,
backward measurability loss
\end{abstract}

\section{Introduction}

Forward-backward stochastic differential equations (FBSDEs) are a class of coupled stochastic differential equation systems consisting of forward stochastic differential equations (SDEs) and backward stochastic differential equations (BSDEs). FBSDEs are not only an important theoretical research direction in probability theory but also a powerful tool for solving many practical problems in finance, control, physics, and other fields. In the financial area, we can solve pricing problems for complex financial derivatives using FBSDEs. Unlike the traditional Black-Scholes pricing model, FBSDEs can accommodate realistic situations such as default risk and non-tradable underlying assets, leading to more accurate pricing. The standard maximum principle in stochastic control adopts FBSDEs to characterize the optimal control. In addition, FBSDEs have widespread applications in risk management and financial engineering \cite{Brigo2014, E2019, Beck2023, Germain2021}.

Regarding theoretical research, studying the properties of solutions to FBSDEs is a critical problem, and we are referred to the work \cite{ma_yong_1999} for a comprehensive review of the theory of FBSDEs. Pardoux and Tang provided an approach for solving FBSDEs by establishing a connection between FBSDEs and partial differential equations (PDEs) \cite{pardoux_tang_1999}; see also the ``four-step scheme'' approach studied in \cite{ma_protter_1994}. Based on this connection, solving FBSDEs can be transformed into solving PDEs, which can then be solved using numerical methods such as finite element, finite difference, or sparse grid methods. In terms of numerical computation, the solution to FBSDEs is often presented as conditional expectations, thus requires the numerical computation of nested conditional expectations; some commonly used methods are mentioned in the literature, including Monte Carlo methods, quantization tree methods, methods based on Malliavin calculus or kernel estimation regression, function space projection methods, integration methods in Wiener space, and Wiener chaos expansion methods. Among these, the function space projection method and the Wiener chaos expansion method have advantages in terms of computational complexity, but so far no single method has been able to simultaneously address the issues of dimensionality and accuracy simultaneously \cite{Bally2003, Bouchard2004, Gobet2005, Crisan2012, Geiss2016}.

In recent years, with the popularity of machine learning and deep learning, some researchers have discovered that neural networks can overcome the difficulties encountered by traditional FBSDE computational methods when dealing with high-dimensional problems. It is because conventional methods generally rely on grid discretization of the spatial domain, which leads to an exponential increase in the number of nodes to be considered for each computation as the problem dimension increases, making it unsuitable for high-dimensional problems. On the other hand, the recently proposed deep BSDE method in \citet{Han2018} does not require spatial discretization, which is based on neural network optimization and gives it an advantage over traditional methods when dealing with high-dimensional problems. In the deep BSDE method, the BSDE is treated as a forward SDE, and the values of the random process to be solved at each time step are learned by minimizing a global loss function. This method can handle general BSDEs and can also be used to solve high-dimensional PDE problems, such as the neural network proposed in \citet{Chan2019} for solving fixed-point problems. In addition, the BSDE solution method based on the multilayer Picard iteration has also received extensive  attention \cite{E2019} and it can more effectively solve high-dimensional, nonlinear PDE problems. Interestingly, these algorithms have good convergence and accuracy in high-dimensional cases.

This paper focuses on developing the Deep BSDE method to make it applicable to coupled FBSDEs. Before summarizing the contributions of this paper, several references related to the discretization of FBSDEs and the Deep BSDE method are provided. \citet{Han2018} proposed the Deep BSDE method and applied it to compute solutions for three decoupled nonlinear FBSDEs, including the Black-Scholes equation in finance, the Allen-Cahn equation in physics, and the HJB equation in optimal control. \citet{han_long_2020} established a theoretically analytical framework for the Deep BSDE method and provided the posteriori error estimates. \citet{bender_zhang_2008} proposed a Picard iteration scheme for coupled FBSDEs and provided the convergence properties characterized by the time discretization step $n$ and the number $m$ of iterations. \citet{ji_peng_2020_IIS} designed three algorithms for coupled nonlinear FBSDEs based on the Deep BSDE method. These algorithms involve different parameterizations of the equation's solution, including considering only forward state feedback, both forward and backward state feedback, and considering feedback from the forward, backward, and control processes together. The recent work \cite{Andersson2023} suggested that the vanilla Deep BSDE method fails at coupled FBSDE cases and proposes a reformulated loss, which is a weighted sum of the initial value and a variance term.  \citet{wang_ni_2022} built a probabilistic framework for Howard's policy iteration algorithm using the language of FBSDEs,
where the Backward Measurability Loss (BML)  is proposed for solving decoupled linear FBSDEs.

The main contribution of this paper is the introduction of BML for coupled nonlinear FBSDEs and an analysis of its theoretical properties. Similar to the optimization-based approach used to solve FBSDEs, the proposed BML can be interpreted as the distance between the current estimated solution and the next solution in fixed-point iteration (Picard iteration) procedure. Particularly, when the BML value is zero, the current estimated solution can be understood as the fixed point of Picard iteration, i.e., the solution of the equation under consideration. The difference with the Deep BSDE method lies in the fact that the proposed BML measures the satisfaction of FBSDE by both the backward process and the control process, while the Deep BSDE method only considers the loss function for the initial value of the backward process and the control process. By optimizing the BML value, this paper attempts to minimize the comprehensive error between the estimated backward and control processes, and the true solution process over the entire time interval. On the other hand, the Deep BSDE method only focuses on minimizing the distance between the estimated value of the backward process and the true value at the initial time point. In other words, this paper focuses more on obtaining the complete solution of FBSDE over the entire time interval, while the Deep BSDE method is more concerned with the value of the backward process of FBSDE at the initial time point. Due to the difference in focus, the estimated accuracy of the solution obtained by optimizing the BML value may not be as good as directly applying the Deep BSDE method at the initial time point. However, the research in this paper provides a feasible approach for problems that require the entire trajectory of the solution of FBSDEs.

The organization of this paper is as follows. Section 2 describes the problem of solving coupled FBSDEs discussed. Section 3 provides a detailed introduction to the proposed BML and analyzes its properties for FBSDEs ranging from simple to general forms. Section 4 presents the specific computational format and algorithm flow based on optimizing the BML. Section 5 applies the proposed method to compute several specific FBSDEs. Finally, Section 6 summarizes the content of this paper.

Some commonly used notations in this paper are summarized as follows:

\begin{itemize}
  \item Let $\mathbb{F}=\{\mathcal{F}_t\}_{0\leq t\leq T}$ denote the natural
      filtration generated by the standard Brownian motion.
  \item Let $L^2_{\mathcal{F}}(\Omega;N)$ be the set of random variables
      $f:\Omega\to N$ satisfying the following conditions: (i) $f$ is
      measurable with respect to the $\sigma$-algebra $\mathcal{F}$, and (ii) $f$ is
      square integrable, i.e., $\operatorname{\mathbb{E}}|f|^2 < \infty$.
  \item Let $L^2_{\mathcal{F}}(0,T;N)$ be the set of stochastic processes
      $X:\Omega\times[0,T]\to N$ satisfying the following conditions: (i) $X$ is
      adapted to the filtration $\mathbb{F}$, and (ii) $\int_0^T
      \operatorname{\mathbb{E}}|X(t)|^2\,dt < \infty$.
  \item Let $L^2_{\mathcal{F}}(\Omega; C[0,T];N)$ be the set of stochastic
      processes $X:\Omega\times[0,T]\to N$ satisfying the following conditions: (i)
      $X$ is adapted and continuous with respect to the filtration
      $\mathbb{F}$, and (ii) $\operatorname{\mathbb{E}}\sup_{t\in[0,T]}|X(t)|^2 <
      \infty$.
  \item Let $L^2_{\mathcal{F}_T}(\Omega;W^{1,\infty}(M;N))$ be the set of
      functions $g:\Omega\times M\to N$ satisfying the following conditions: (i)
      $g(\theta)$ is uniformly continuous with respect to $\theta$, i.e., there
      exists a constant $L_g$ such that for any $\theta_1,\theta_2\in M$, the
      inequality $|g(\theta_1) - g(\theta_2)| \leq L_g|\theta_1 - \theta_2|$ holds
      almost surely, (ii) $g$ is $\mathcal{F}_T$-measurable for any fixed $\theta$,
      and (iii) if $\theta=0$ is fixed, $g\in L^2_{\mathcal{F}_T}(\Omega;N)$.
  \item Let $L^2_{\mathcal{F}}(0,T;W^{1,\infty}(M;N))$ be the set of
      functions $f:\Omega\times[0,T]\times M\to N$ satisfying the following
      conditions: (i) $f(t,\theta)$ is uniformly continuous with respect to $\theta$, i.e.,
      there exists a constant $L_f$ such that for any $\theta_1,\theta_2\in M$, the
      inequality $|f(t,\theta_1) - f(t,\theta_2)| \leq L_f|\theta_1 - \theta_2|$
      holds, (ii) $f$ is adapted to the filtration $\mathbb{F}$ for any fixed
      $\theta$, and (iii) if $\theta=0$ is fixed, $f\in L^2_{\mathcal{F}}(0,T;N)$.
  \item Let $\mathcal{M}[0,T]$ be the product space $L^2_{\mathcal{F}}(\Omega;
      C[0,T];\mathbb{R}^m)\times L^2_{\mathcal{F}}(0,T;\mathbb{R}^{m\times d})$.
\end{itemize}

\section{Problem Formulation}

This paper considers the following FBSDE
\begin{equation}
\label{eq:fully-coupled-nonlinear-FBSDE}
\left\{
\begin{aligned}
X_t &= x_0 + \int_0^tb(s,X_s,Y_s,Z_s)\,ds +
\int_0^t\sigma(s,X_s,Y_s,Z_s)\,dW_s,\\ Y_t &= g(X_T) +
\int_t^Tf(s,X_s,Y_s,Z_s)\,ds - \int_t^TZ_s\,dW_s.
\end{aligned}
\right.
\end{equation}
Here, $X$ is a stochastic process taking values in a $n$-dimensional Euclidean space, commonly referred to as the \textit{forward process} or \textit{state}; $Y$ is a stochastic process taking values in a $m$-dimensional Euclidean space, commonly referred to as the \textit{backward process}; and $Z$ is a $\mathbb{R}^{m\times d}$-valued stochastic process\footnote{In some literature, this process is referred to as the control process of FBSDE. However, in order to avoid confusion with the control process $\{u_t,t\geq 0\}$ in stochastic optimal control problems, this terminology is not adopted in this paper.}. The functions $b,\sigma,f$, and $g$ are measurable functions appropriate in dimension. Here, $b$ and $\sigma$ are referred to as the drift and diffusion coefficient, respectively, $f$ is the generator of the backward equation, and $g$ is the terminal condition. The $x_0\in\mathbb{R}^n$ is the initial condition of the forward equation.

Strictly speaking, the FBSDE in
Eq.~\eqref{eq:fully-coupled-nonlinear-FBSDE} is defined on a complete probability space 
$(\Omega,\mathcal{F},\mathbb{P},\mathbb{F})$, which admits a $d$-dimensional
standard Brownian motion $W$ and its associated natural filtration
$\mathbb{F}=\{\mathcal{F}_t\}_{0\leq t\leq T}$.
The finite positive number $T\in(0,\infty)$
is referred to as the terminal time.
The triplet $(X,Y,Z)\in L^2_{\mathcal{F}}(\Omega;
  C[0,T];\mathbb{R}^n) \times L^2_{\mathcal{F}}(\Omega;
  C[0,T];\mathbb{R}^m) \times
  L^2_{\mathcal{F}}(0,T;\mathbb{R}^{m\times d})$ is called a solution to
FBSDE~\eqref{eq:fully-coupled-nonlinear-FBSDE}, if and only if, for any time $t\in[0,T]$, 
FBSDE~\eqref{eq:fully-coupled-nonlinear-FBSDE} holds almost surely.
The following theorem provides the existence and uniqueness of solutions to coupled FBSDEs in a sufficiently small time interval.

\begin{theorem}[\cite{ma_yong_1999}]
  \label{th:uniqueness-and-existence-fully-coupled-nonlinear-FBSDE}
  Let the terminal condition $g\in
  L^2_{\mathcal{F}}(\Omega;W^{1,\infty}(\mathbb{R}^n;\mathbb{R}^m)$,
  and suppose that
\begin{equation*}
\left\{
\begin{aligned}
b &\in
L^2_{\mathcal{F}}(0,T;W^{1,\infty}(\mathbb{R}^n\times\mathbb{R}^m\times\mathbb{R}^{m\times
  d};\mathbb{R}^n),\\ \sigma &\in
L^2_{\mathcal{F}}(0,T;W^{1,\infty}(\mathbb{R}^n\times\mathbb{R}^m\times\mathbb{R}^{m\times
  d};\mathbb{R}^m),\\ f &\in
L^2_{\mathcal{F}}(0,T;W^{1,\infty}(\mathbb{R}^n\times\mathbb{R}^m\times\mathbb{R}^{m\times
  d};\mathbb{R}^{m\times d}).
\end{aligned}
\right.
\end{equation*}
Moreover, assume that there exist constants $L_0 $ and $L_g$  such that
the following inequalities hold almost surely:
\begin{equation*}
\left\{
  \begin{aligned}
  &|\sigma(t,x,y,\hat{z}) - \sigma(t,x,y,\check{z})| \leq
    L_0|\hat{z} - \check{z}|,\quad t\ \textrm{a.e. on}\ (0,\infty),~ \forall
    (x,y)\in\mathbb{R}^n\times\mathbb{R}^m,~\hat{z},\check{z}\in\mathbb{R}^{m\times d},\\
  &|g(\hat{x}) - g(\check{x})| \leq
  L_g|\hat{x} - \check{x}|,\quad\quad\quad\quad\quad\quad~\forall \hat{x},\check{x}\in\mathbb{R}^n.
  \end{aligned}
\right.
\end{equation*}
If $L_0L_g < 1$, then there exists a constant $T_0 > 0$ such that for any
$T\in(0,T_0]$ and any initial condition
  $x_0\in\mathbb{R}^n$, FBSDE~\eqref{eq:fully-coupled-nonlinear-FBSDE}
  admits a unique solution.
\end{theorem}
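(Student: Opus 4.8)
The standard strategy for coupled FBSDEs is to decouple the system iteratively: start from a guess for the pair $(Y,Z)$, solve the forward SDE for $X$, then solve the resulting backward SDE for a new $(Y,Z)$, and show the map is a contraction on a suitable product space when $T$ is small. First I would fix the space $\mathcal{M}[0,T] = L^2_{\mathcal{F}}(\Omega; C[0,T];\mathbb{R}^m)\times L^2_{\mathcal{F}}(0,T;\mathbb{R}^{m\times d})$ (already introduced in the notation list) and define a map $\Phi$ on it: given $(Y,Z)\in\mathcal{M}[0,T]$, let $X$ solve the forward SDE $X_t = x_0 + \int_0^t b(s,X_s,Y_s,Z_s)\,ds + \int_0^t \sigma(s,X_s,Y_s,Z_s)\,dW_s$, which has a unique strong solution by the Lipschitz hypotheses on $b,\sigma$ (standard SDE theory), with the usual $L^2$ estimate $\mathbb{E}\sup_{t\le T}|X_t|^2 \le C$. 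Then let $(\bar Y,\bar Z)$ be the unique solution of the BSDE $\bar Y_t = g(X_T) + \int_t^T f(s,X_s,\bar Y_s,\bar Z_s)\,ds - \int_t^T \bar Z_s\,dW_s$, which exists and is unique by Pardoux–Peng since $f$ is Lipschitz in $(y,z)$ and $g(X_T)\in L^2_{\mathcal{F}_T}$. Set $\Phi(Y,Z) = (\bar Y,\bar Z)$. A fixed point of $\Phi$ is exactly a solution of \eqref{eq:fully-coupled-nonlinear-FBSDE}.

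**Next I would establish the contraction estimate.** Take two inputs $(Y^1,Z^1),(Y^2,Z^2)$, denote by $X^1,X^2$ the corresponding forward solutions and $(\bar Y^i,\bar Z^i)$ the backward ones. Using Itô's formula on $|X^1_t - X^2_t|^2$, Gronwall, and the Lipschitz bounds, one gets $\mathbb{E}\sup_{t\le T}|X^1_t - X^2_t|^2 \le C(T)\,\|(Y^1,Z^1)-(Y^2,Z^2)\|_{\mathcal{M}}^2$ where $C(T)\to 0$ as $T\to 0$ — crucially the forward estimate produces a small prefactor because the difference is driven only by the $\int_0^t(\cdots)\,ds$ and $\int_0^t(\cdots)\,dW_s$ terms over a short horizon (no terminal data on the forward side). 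Then applying the standard a priori BSDE estimate to $\bar Y^1 - \bar Y^2$ gives $\|(\bar Y^1,\bar Z^1) - (\bar Y^2,\bar Z^2)\|_{\mathcal{M}}^2 \le C'\big(\mathbb{E}|g(X^1_T)-g(X^2_T)|^2 + \mathbb{E}\int_0^T|f(s,X^1_s,\cdot)-f(s,X^2_s,\cdot)|^2\,ds\big) \le C'(L_g^2 + T\cdot\text{const})\,\mathbb{E}\sup_{t\le T}|X^1_t-X^2_t|^2$. Chaining the two estimates, the composite Lipschitz constant of $\Phi$ is bounded by something of the form $(L_g^2 + o(1))\cdot C(T)$ which, since $C(T)\to 0$, can be made strictly less than $1$ by choosing $T_0$ small. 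By the Banach fixed-point theorem $\Phi$ has a unique fixed point on $\mathcal{M}[0,T]$ for every $T\le T_0$, and recovering $X$ from it yields the unique solution triple.

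**The main obstacle — and the reason the hypothesis $L_0 L_g < 1$ appears — is controlling the coupling through $\sigma$'s dependence on $Z$.** When $\sigma$ depends on $Z$ (not just on $X,Y$), the forward estimate for $\mathbb{E}\sup_{t\le T}|X^1_t - X^2_t|^2$ picks up a term $L_0^2\,\mathbb{E}\int_0^T|Z^1_s - Z^2_s|^2\,ds$ from the stochastic integral via the Burkholder–Davis–Gundy / Itô isometry, and this term does \emph{not} carry a small-$T$ prefactor — it is an $O(1)$ contribution scaled by $L_0^2$. Tracing it through the BSDE estimate, it gets multiplied by (roughly) $L_g^2$ when hitting the terminal condition $g(X_T)$, so the dangerous part of the composite contraction constant is approximately $L_0^2 L_g^2$ plus terms that vanish as $T\to0$; the condition $L_0 L_g < 1$ is exactly what guarantees this irreducible part is below $1$, after which shrinking $T_0$ absorbs everything else. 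I would therefore organize the contraction estimate carefully to isolate the $Z$-through-$\sigma$ term, bound it by $L_0^2$ times the $Z$-difference norm without any $T$ factor, and verify that after composing with the backward estimate the total coefficient is $L_0^2 L_g^2 + (\text{terms} \to 0)$, invoking $L_0 L_g < 1$ to close the argument. (This is precisely the mechanism in Ma–Yong's monograph \cite{ma_yong_1999}; an alternative but essentially equivalent route is to iterate a contraction that outputs the pair $(X,Y)$ and keeps $Z$ determined by the BSDE, but the bookkeeping on the $\sigma$-in-$Z$ term is the same.)
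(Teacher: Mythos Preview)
Your proposal is correct and matches the paper's treatment. The paper does not actually give a proof of this theorem: it is quoted from Ma--Yong \cite{ma_yong_1999}, and the remark immediately following it simply states that ``the proof is based on constructing a contraction mapping and applying Picard's iteration,'' referring the reader to the monograph for details. Your outline is precisely that contraction argument, and you have correctly identified the role of the hypothesis $L_0 L_g < 1$ in controlling the $\sigma$-in-$Z$ coupling that does not pick up a small-$T$ prefactor.
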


\begin{remark}
It should be noted that this theorem only provides the existence and uniqueness of solutions for small enough time intervals, i.e., $T$ must be smaller than a certain constant $T_0$ that depends on the problem itself. The proof is based on constructing a contraction mapping and applying Picard's iteration. Engaging readers may refer to the monograph \cite{ma_yong_1999} for technical details.

Besides this contraction mapping approach, two methods exist for studying the existence and uniqueness of FBSDEs, the ``four-step scheme'' approach and the ``monotone assumptions'' approach. The former verifies the existence and uniqueness by expressing the solution with a corresponding PDE's solution \cite{ma_protter_1994}. This method may deal with arbitrary time duration but is not applicable to FBSDEs with random coefficients. The latter, however, tackles the existence and uniqueness using a completely different approach by assuming a monotone condition \cite{peng_wu_1999,pardoux_tang_1999}. Our paper does not intend to provide new results in this direction. Instead, the rigorous foundation of our work may build upon these methods. Indeed, all the conditions required to obtain our results can be easily verified under assumptions made by any of the three methods which ensures the existence and uniqueness of the solution.
\end{remark}

The subsequent parts of this paper aim to study the problem of finding the solution $(X,Y,Z)$ to FBSDE~\eqref{eq:fully-coupled-nonlinear-FBSDE} with a given time interval $[0,T]$, initial condition $x_0$, terminal condition $g$, drift coefficient $b$, diffusion coefficient $\sigma$, and generator $f$. In particular, the focus is on designing numerical algorithms that can sample the stochastic process $(X,Y,Z)$. More specifically, the goal is to devise specific computational processes that can be used to obtain sample paths of the solution.

\section{Backward Measurability Loss}

Coupled FBSDEs cannot be directly solved by discretizing the time, as commonly done with first-order differential equations $\dot{x}=f(t,x)$. If we directly discretize the coupled FBSDEs, the "increment" on the right-hand side of the equation would be related to the current values of $X$, $Y$, and $Z$, while only $x_0$ is known as the initial condition, and the initial conditions of $Y$ and $Z$ are unknown. Therefore, we cannot start from the initial time and iteratively compute the differential relationship the equation provides for time $t$.

In this paper, we consider solving the coupled
FBSDE~\eqref{eq:fully-coupled-nonlinear-FBSDE} using an optimization-based method,
and the intuition is given below.
Let $(X,Y,Z)$ denote the true solution to the equation. Given a pair of
stochastic processes $(\tilde{y},\tilde{z}) \in L^2_{\mathcal{F}}(\Omega;
C[0,T];\mathbb{R}^m) \times L^2_{\mathcal{F}}(0,T;\mathbb{R}^{m\times d})$ as a
trial solution, we are interested in measuring the discrepancy between $Y$
and $\tilde{y}$, as well as $Z$ and $\tilde{z}$. Specifically, we seek to find a
distance function $\operatorname{dist}((\tilde{y},\tilde{z}), (Y,Z))$ defined on
the space $L^2_{\mathcal{F}}(\Omega; C[0,T];\mathbb{R}^m) \times
L^2_{\mathcal{F}}(0,T;\mathbb{R}^{m\times d})$ (denoted by $\mathcal{M}[0,T]$
for simplicity) to represent this discrepancy.
Next, by solving the optimization problem \begin{equation} \label{op:dist-yz-YZ}
\min_{\tilde{y},\tilde{z}} \quad \operatorname{dist}((\tilde{y},\tilde{z}),
(Y,Z)), \end{equation} we can obtain the solution $(Y,Z)$. Finally, substituting
$(Y,Z)$ into the forward equation of the coupled FBSDE, we obtain the forward process
$X$.

In summary, from the optimization perspective, we need to address two issues: 
finding an appropriate metric $\operatorname{dist}(\cdot,\cdot)$
and solving the optimization problem \eqref{op:dist-yz-YZ}.
The theoretical part of this paper mainly discusses how to implement the first
step, while for the optimization problem in the second step, we choose to apply
neural network optimization methods. In fact, the difficulty
of the optimization perspective lies in finding a
suitable \textit{computationally feasible} metric. By \textit{computationally
feasible}, we mean that, given a trial solution $(\tilde{y},\tilde{z})$, we can
compute the discrepancy $\operatorname{dist}((\tilde{y},\tilde{z}), (Y,Z))$ even
if we do not know the true solution $(Y,Z)$. This requirement of
computational feasibility is evident because we do not know the true solution
$(Y,Z)$ of the coupled FBSDE.  Therefore, as a compromise, we seek an
alternative to $\operatorname{dist}((\tilde{y},\tilde{z}), (Y,Z))$, an error
function $\ell$ that only depends on the trial solution and simultaneously
consider the optimization problem 
\begin{equation}
\label{op:ell-yz}
\min_{\tilde{y},\tilde{z}} \quad \ell(\tilde{y},\tilde{z}).
\end{equation}

The contribution of this paper lies in finding a specific error function
$\ell$ called the Backward Measurability Loss (BML), which is defined by
\begin{equation}
\label{eqdef:fully-coupled-nonlinear-BML}
\operatorname{BML}(\tilde{y},\tilde{z};\mu) \coloneqq \int_0^T
\operatorname{\mathbb{E}} \left|\tilde{y}_t - \left(g(\widetilde{X}_T) +
\int_t^T {f}(s,\widetilde{X}_s,\tilde{y}_s,\tilde{z}_s)\,ds - \int_t^T
\tilde{z}_s\,dW_s\right) \right|^2 \mu(dt),
\end{equation}
with a $\sigma$-finite measure $\mu$ on $[0,T]$ and $\widetilde{X}$ the solution to the SDE
\begin{equation}
\label{eq:BML-tilde-X}
\widetilde{X}_t = x_0 + \int_0^t
b(s,\widetilde{X}_s,\tilde{y}_s,\tilde{z}_s)\,ds + \int_0^t
\sigma(s,\widetilde{X}_s,\tilde{y}_s,\tilde{z}_s)\,dW_s.
\end{equation}
Next, we analyze the physical interpretation of the BML by considering three
cases: decoupled linear FBSDE, decoupled nonlinear FBSDE, and fully coupled
nonlinear FBSDE. Specifically, for the decoupled linear FBSDE, we construct a
metric $\operatorname{dist}\coloneqq\operatorname{dist}_\mu$ ($\mu$ being any
$\sigma$-finite measure on $[0,T]$)\footnote{Strictly speaking,
$\sqrt{\operatorname{dist}_\mu}$ satisfies the properties of a pseudometric.
However, for computational
convenience, we choose to optimize the squared version of it.}:
\begin{equation}
\label{eqdef:dist-yz-YZ}
\operatorname{dist}_\mu((\hat{y},\hat{z}), (\check{y},\check{z})) = \int_0^T
\operatorname{\mathbb{E}} \left\{ |\hat{y}_t - \check{y}_t|^2 + \int_t^T
|\hat{z}_s - \check{z}_s|^2 \,ds \right\}\, \mu(dt).
\end{equation}
It is then shown that for any $(\tilde{y},\tilde{z}) \in \mathcal{M}[0,T]$, we have
\begin{equation}
\label{eq:strict-eq-dist-BML}
\operatorname{dist}_\mu((\tilde{y},\tilde{z}), (Y,Z)) = \operatorname{BML}(\tilde{y},\tilde{z};\mu),
\end{equation}
as stated in Theorem \ref{th:BML-interpretation-simple-decoupled-FBSDE}. In this
case, the optimization problems \eqref{op:dist-yz-YZ} and \eqref{op:ell-yz} are
completely equivalent.

For nonlinear FBSDEs, including fully coupled nonlinear FBSDE, although we
may not find a perfect explanation for the BML as that in Equation
\eqref{eq:strict-eq-dist-BML}, we can still interpret the BML value and explain
the physical meaning of optimizing the BML value from the perspective of
fixed-point iterations. Specifically, we construct a mapping $\Phi$ defined on
$\mathcal{M}[0,T]$ such that for any pair of stochastic processes
$(\tilde{y},\tilde{z})$, it is a fixed point of $\Phi$ if and only if it is a
solution to the backward equation. It is then proven that for any
$(\tilde{y},\tilde{z}) \in \mathcal{M}[0,T]$, we have \begin{equation}
\label{eq:indirect-eq-dist-BML} \operatorname{dist}_\mu((\tilde{y},\tilde{z}),
\Phi(\tilde{y},\tilde{z})) = \operatorname{BML}(\tilde{y},\tilde{z};\mu),
\end{equation} as stated in Proposition
\ref{prop:BML-interpretation-nonlinear-decoupled-FBSDE} and Proposition
\ref{prop:BML-interpretation-fully-coupled-nonlinear-FBSDE}.

\subsection{Simple Decoupled FBSDE}
\label{ssec:simple-decoupled-FBSDE}

Let's first consider a special case of the original equation
\eqref{eq:fully-coupled-nonlinear-FBSDE}: the drift coefficient $b$ and
diffusion coefficient $\sigma$ in the forward equation do not depend on the
solution of the backward equation, and the generator $f$ and terminal condition
$g$ in the backward equation are known stochastic processes $\{f_t,0\leq t\leq
T\}$ and random variable $\xi$, respectively, independent of the solution
$(X,Y,Z)$. In this case, the desired equations can be written
as
\begin{equation}
\label{eq:simple-decoupled-FBSDE}
\left\{\begin{aligned}
X_t &= x_0 + \int_0^tb(s,X_s)\,ds + \int_0^t\sigma(s,X_s)\,dW_s,\\ Y_t
&= \xi + \int_t^Tf_s\,ds - \int_t^TZ_s\,dW_s.
\end{aligned} \right.
\end{equation}
This represents a decoupled FBSDE, where
the forward and backward equations can be solved separately. When computing the
BML (Backward Memorization Loss) value, the $\widetilde{X}$ in equation
\eqref{eq:BML-tilde-X} represents the forward process $X$ of this FBSDE.
Therefore, we assume that $X$ is known and focus on discussing the solution
$(Y,Z)$ of the backward equation.

Considering the decoupled FBSDE \eqref{eq:simple-decoupled-FBSDE}, according to
definition \eqref{eqdef:fully-coupled-nonlinear-BML}, for any trial solution
$(\tilde{y},\tilde{z})\in \mathcal{M}[0,T]$, its BML value is defined as $$
\operatorname{BML}(\tilde{y},\tilde{z};\mu) = \int_0^T \operatorname{\mathbb{E}}
\left|\tilde{y}_t - \biggl(\xi + \int_t^T{f}_s\,ds -
\int_t^T\tilde{z}_s\,dW_s\biggr) \right|^2\mu(dt).$$

Let's denote the solution of the BSDE as $(Y,Z)$, and define
a metric $\operatorname{dist}_\mu$ on the space $\mathcal{M}[0,T]$ as given in
equation \eqref{eqdef:dist-yz-YZ}. Then the following theorem relates the BML
value to this metric.

\begin{theorem}
    \label{th:BML-interpretation-simple-decoupled-FBSDE} 
Let $\mu$ be a $\sigma$-finite measure on $[0,T]$, and consider the backward equation in FBSDE \eqref{eq:simple-decoupled-FBSDE}.
If $\xi\in L^2_{\mathcal{F}_T}$ and
$f\in L^2_{\mathcal{F}}(0,T;\mathbb{R}^m)$, then for any trial solution
$(\tilde{y},\tilde{z})\in \mathcal{M}[0,T]$, its BML value is equal to the
distance between the trial solution and the true solution $(Y,Z)$, i.e.,
equation \eqref{eq:strict-eq-dist-BML} holds.
\end{theorem}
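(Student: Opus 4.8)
The plan is to use the fact that, because the generator $f_s$ and the terminal value $\xi$ do not depend on $(X,Y,Z)$, the backward equation in \eqref{eq:simple-decoupled-FBSDE} is a linear BSDE whose solution is explicit: the process $M_t \coloneqq \mathbb{E}\!\left[\xi + \int_0^T f_s\,ds \mid \mathcal{F}_t\right]$ is a square-integrable $\mathbb{F}$-martingale (using $\xi\in L^2_{\mathcal{F}_T}$ and $f\in L^2_{\mathcal{F}}(0,T;\mathbb{R}^m)$), so by the martingale representation theorem there is a unique $Z\in L^2_{\mathcal{F}}(0,T;\mathbb{R}^{m\times d})$ with $M_t = M_0 + \int_0^t Z_s\,dW_s$; putting $Y_t \coloneqq M_t - \int_0^t f_s\,ds$ one checks $(Y,Z)\in\mathcal{M}[0,T]$ solves the backward equation, and in particular the identity $Y_t = \xi + \int_t^T f_s\,ds - \int_t^T Z_s\,dW_s$ holds almost surely for every $t$. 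Substituting this identity into the integrand of $\operatorname{BML}(\tilde{y},\tilde{z};\mu)$ gives, for each fixed $t$ and almost surely,
\[
\tilde{y}_t - \Big(\xi + \int_t^T f_s\,ds - \int_t^T \tilde{z}_s\,dW_s\Big) = (\tilde{y}_t - Y_t) - \int_t^T (Z_s - \tilde{z}_s)\,dW_s .
\]

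Write $A_t \coloneqq \tilde{y}_t - Y_t$, which is $\mathcal{F}_t$-measurable and square-integrable since $\tilde{y},Y\in L^2_{\mathcal{F}}(\Omega;C[0,T];\mathbb{R}^m)$, and $I_t \coloneqq \int_t^T (Z_s - \tilde{z}_s)\,dW_s$. Then the BML integrand equals $|A_t|^2 - 2A_t\transpose I_t + |I_t|^2$. Since $Z-\tilde{z}\in L^2_{\mathcal{F}}(0,T;\mathbb{R}^{m\times d})$, the process $s\mapsto\int_t^s(Z_u-\tilde{z}_u)\,dW_u$ is a genuine (not merely local) square-integrable martingale, so $\mathbb{E}[I_t\mid\mathcal{F}_t]=0$; combining this with $\mathcal{F}_t$-measurability of $A_t$ and the tower property yields $\mathbb{E}[A_t\transpose I_t]=0$, while the It\^o isometry gives $\mathbb{E}|I_t|^2 = \mathbb{E}\int_t^T |Z_s-\tilde{z}_s|^2\,ds$. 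Hence, for every $t\in[0,T]$,
\[
\mathbb{E}\Big|\tilde{y}_t - \Big(\xi + \int_t^T f_s\,ds - \int_t^T \tilde{z}_s\,dW_s\Big)\Big|^2 = \mathbb{E}|\tilde{y}_t - Y_t|^2 + \mathbb{E}\int_t^T |\tilde{z}_s - Z_s|^2\,ds .
\]
Both sides are nonnegative and measurable in $t$; integrating against $\mu(dt)$ and using Tonelli's theorem to move the expectation and the $ds$-integral past the $\mu$-integral produces exactly $\operatorname{BML}(\tilde{y},\tilde{z};\mu) = \operatorname{dist}_\mu((\tilde{y},\tilde{z}),(Y,Z))$, which is \eqref{eq:strict-eq-dist-BML}.

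I do not expect a genuine obstacle here; the only thing requiring care is the measure-theoretic bookkeeping. One must verify that the stated integrability hypotheses are precisely what makes each step legitimate: $(\tilde{y},\tilde{z})\in\mathcal{M}[0,T]$ and $(Y,Z)\in\mathcal{M}[0,T]$ guarantee $A_t\in L^2$ and that $I$ is a true martingale (so the cross term vanishes) and that the It\^o isometry applies; nonnegativity of the integrands justifies the Tonelli interchange. It is also worth noting the role of $\mu$: when $\mu$ is finite the common value is finite because $\mathbb{E}\sup_{t}|\tilde{y}_t-Y_t|^2<\infty$ and $\int_0^T\mathbb{E}|\tilde{z}_s-Z_s|^2\,ds<\infty$, whereas for a merely $\sigma$-finite $\mu$ the identity should be read as an equality in $[0,\infty]$.
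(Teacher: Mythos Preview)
Your proof is correct and follows essentially the same route as the paper: substitute the BSDE identity for $Y_t$ into the BML integrand, expand the square, kill the cross term using that $\tilde y_t-Y_t$ is $\mathcal F_t$-measurable while the stochastic integral over $[t,T]$ has zero conditional mean, and apply the It\^o isometry before integrating in $\mu$. You supply more justification than the paper does (explicit construction of $(Y,Z)$ via martingale representation, the Tonelli step, and the remark on finiteness for $\sigma$-finite $\mu$), but the argument is the same.
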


\begin{proof}
    Substituting the backward equation into the expression of the BML
value, we have
\begin{align*} \operatorname{BML}(\tilde{y},\tilde{z};\mu) &=
\int_0^T \operatorname{\mathbb{E}} \left|\tilde{y}_t - \biggl(\xi +
\int_t^T{f}_s\,ds - \int_t^T\tilde{z}_s\,dW_s\biggr) \right|^2\mu(dt)\\ &=
\int_0^T \operatorname{\mathbb{E}} \left|\tilde{y}_t - Y_t + \biggl(\xi +
\int_t^T{f}_s\,ds - \int_t^TZ_s\,dW_s\biggr) \right.\\ &\left.
\hphantom{\,=\int_0^T \operatorname{\mathbb{E}} \biggl|} -
\biggl(\xi + \int_t^T{f}_s\,ds - \int_t^T\tilde{z}_s\,dW_s\biggr)
\right|^2\mu(dt) \\ &= \int_0^T \operatorname{\mathbb{E}} \left|\tilde{y}_t -
Y_t + \biggl(\int_t^TZ_s\,dW_s - \int_t^T\tilde{z}_s\,dW_s\biggr)
\right|^2\mu(dt)\\ &= \int_0^T \operatorname{\mathbb{E}} \left\{|\tilde{y}_t -
Y_t|^2 + \biggl|\int_t^TZ_s\,dW_s - \int_t^T\tilde{z}_s\,dW_s\biggr|^2
\right\}\mu(dt)\\ &= \int_0^T \operatorname{\mathbb{E}} \left\{|\tilde{y}_t -
Y_t|^2 + \int_t^T|Z_s-\tilde{z}_s|^2\,ds\right\}\mu(dt).
\end{align*}
The third equality holds because the cross terms when expanding the squares cancel out.
\end{proof}

\begin{remark} 
    This theorem demonstrates that for FBSDEs of form
\eqref{eq:simple-decoupled-FBSDE}, if we take the BML value of a trial solution
$(\tilde{y},\tilde{z})$ as the error function $\ell(\cdot,\cdot)$, then the
optimization problems \eqref{op:dist-yz-YZ} and \eqref{op:ell-yz} are completely
equivalent, as their objective functions are strictly equal. In other words,
given two sets of trial solutions $(\tilde{y}^{(1)},\tilde{z}^{(1)})$ and
$(\tilde{y}^{(2)},\tilde{z}^{(2)})$, if we compute their BML values as $\ell_1$
and $\ell_2$, respectively, and $\ell_1 < \ell_2$, we can say that
$(\tilde{y}^{(1)},\tilde{z}^{(1)})$ is closer to the true solution $(Y,Z)$ than
$(\tilde{y}^{(2)},\tilde{z}^{(2)})$, even without knowing the specific values of
$(Y,Z)$.
\end{remark}

\begin{remark}
    The choice of the time measure $\mu$ can be determined based on
specific requirements. For example, if simplicity is preferred, $\mu$ can be
chosen as the Dirac measure at the starting time, $\delta(dt)$, which eliminates
the time integration in the BML definition, i.e.,
$$\operatorname{BML}(\tilde{y},\tilde{z};\delta) =
\operatorname{\mathbb{E}} \left|\tilde{y}_0 - \biggl(g(\widetilde{X}_T) +
\int_t^T{f}(s, \widetilde{X}_s,\tilde{y}_s,\tilde{z}_s)\,ds -
\int_t^T\tilde{z}_s\,dW_s\biggr) \right|^2.$$
If a balanced evaluation of
the error over the entire time interval is desired, $\mu$ can be chosen as the
Lebesgue measure $dt$. If both a comprehensive evaluation of the error over the
entire time interval and an emphasis on errors near the origin are desired,
$\mu$ can be chosen as an exponentially decaying measure $e^{-\gamma t}\,dt$.
For more details, please refer to \ref{ssec:calc-BML} on the computation of BML
values.
\end{remark}


\subsection{General Decoupled FBSDE}

In this section, we consider another special case of the original equation
\eqref{eq:fully-coupled-nonlinear-FBSDE}: the decoupled nonlinear FBSDE. It is
the nonlinear version of the case considered in the previous section, relaxing
the restriction on generator $f$, which is now allowed to depend on the
solution $(Y,Z)$ of the backward equation. The equation to be solved can be
written as follows
\begin{equation}
\label{eq:nonlinear-decoupled-FBSDE}
\left\{
\begin{aligned}
X_t &= x_0 + \int_0^tb(s,X_s)\,ds + \int_0^t\sigma(s,X_s)\,dW_s,\\
Y_t &= \xi + \int_t^Tf(s,Y_s,Z_s)\,ds - \int_t^TZ_s\,dW_s.
\end{aligned}
\right.
\end{equation}
Note that in this section, we still do not allow $f$ to depend on the solution
$X$ of the forward equation, so we are considering a decoupled FBSDE. Following
the approach used in the previous section, we assume that $X$ is known and focus
on discussing the solution $(Y,Z)$ of the backward equation.

Consider the decoupled nonlinear FBSDE \eqref{eq:nonlinear-decoupled-FBSDE}.
According to the definition \eqref{eqdef:fully-coupled-nonlinear-BML}, for any
trial solution $(\tilde{y},\tilde{z})\in \mathcal{M}[0,T]$, its BML value is
given by
$$ \operatorname{BML}(\tilde{y},\tilde{z};\mu) = \int_0^T
\operatorname{\mathbb{E}} \left|\tilde{y}_t - \biggl(\xi +
\int_t^Tf(s,\tilde{y}_s,\tilde{z}_s)\,ds -
\int_t^T\tilde{z}_s\,dW_s\biggr) \right|^2\mu(dt).$$

Let's denote the solution of the BSDE as $(Y,Z)$, and define
the metric $\operatorname{dist}_\mu$ on the space $\mathcal{M}[0,T]$ as in
equation \eqref{eqdef:dist-yz-YZ}. Unfortunately, for the nonlinear case, we
cannot directly equate the BML value with $\operatorname{dist}_\mu$. In fact,
the connection between these two quantities is given by the following
proposition.

\begin{proposition}
\label{prop:BML-interpretation-nonlinear-decoupled-FBSDE}
Let $\mu$ be a $\sigma$-finite measure on $[0,T]$.
Consider the backward equation in FBSDE
\eqref{eq:nonlinear-decoupled-FBSDE}.
Assume that the terminal condition $\xi\in L^2_{\mathcal{F}_T}$ and the
generator $f\in
L^2_{\mathcal{F}}(0,T;W^{1,\infty}(\mathbb{R}^m\times\mathbb{R}^{m\times
d};\mathbb{R}^m))$. Then, for any trial solution $(\tilde{y},\tilde{z})\in
\mathcal{M}[0,T]$, its BML value is equal to its distance to the true solution
$(\widetilde{Y},\widetilde{Z})$ of a linear BSDE, i.e.,
$$\operatorname{BML}(\tilde{y},\tilde{z};\mu) = 
\operatorname{dist}_\mu((\tilde{y},\tilde{z}), (\widetilde{Y},\widetilde{Z})).$$
Here, $\operatorname{dist}_\mu$ is defined by equation \eqref{eqdef:dist-yz-YZ},
and $(\widetilde{Y},\widetilde{Z})$ satisfies the linear BSDE:
\begin{equation}
\label{eq:BML-linearized-BSDE}
\widetilde{Y}_t = \xi + \int_t^Tf(s,\tilde{y}_s,\tilde{z}_s)\,ds - \int_t^T\widetilde{Z}_s\,dW_s.
\end{equation}
\end{proposition}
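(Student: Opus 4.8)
The plan is to reduce the claim to Theorem~\ref{th:BML-interpretation-simple-decoupled-FBSDE} by \emph{freezing the generator at the trial solution}. Given $(\tilde{y},\tilde{z})\in\mathcal{M}[0,T]$, I would introduce the process $\hat{f}_s\coloneqq f(s,\tilde{y}_s,\tilde{z}_s)$. Since $f(\cdot,\theta)$ is adapted for each fixed $\theta$ and Lipschitz in $(y,z)$ with constant $L_f$, while $(\tilde{y},\tilde{z})$ is adapted, the process $\hat{f}$ is adapted; and the pointwise bound $|\hat{f}_s|\leq|f(s,0,0)|+L_f(|\tilde{y}_s|+|\tilde{z}_s|)$ combined with $f(\cdot,0,0)\in L^2_{\mathcal{F}}(0,T;\mathbb{R}^m)$, $\operatorname{\mathbb{E}}\sup_{t\in[0,T]}|\tilde{y}_t|^2<\infty$, and $\int_0^T\operatorname{\mathbb{E}}|\tilde{z}_s|^2\,ds<\infty$ (the last two from $(\tilde{y},\tilde{z})\in\mathcal{M}[0,T]$) gives $\hat{f}\in L^2_{\mathcal{F}}(0,T;\mathbb{R}^m)$. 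Hence, with terminal value $\xi\in L^2_{\mathcal{F}_T}$, the BSDE \eqref{eq:BML-linearized-BSDE} is exactly a \emph{simple} decoupled BSDE of the type treated in Section~\ref{ssec:simple-decoupled-FBSDE}, driven by the \emph{known} process $\hat{f}$; by the standard martingale-representation argument it admits a unique solution $(\widetilde{Y},\widetilde{Z})\in\mathcal{M}[0,T]$, so the object in the statement is well defined.

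Next I would observe that, directly from the definition \eqref{eqdef:fully-coupled-nonlinear-BML}, the integrand of $\operatorname{BML}(\tilde{y},\tilde{z};\mu)$ for the decoupled nonlinear FBSDE \eqref{eq:nonlinear-decoupled-FBSDE} coincides pointwise with the integrand of the BML associated to the simple decoupled FBSDE whose generator is the known process $\hat{f}$ and whose terminal condition is $\xi$. Therefore Theorem~\ref{th:BML-interpretation-simple-decoupled-FBSDE}, applied with $f_s\coloneqq\hat{f}_s$, yields at once
\[
\operatorname{BML}(\tilde{y},\tilde{z};\mu)=\operatorname{dist}_\mu\big((\tilde{y},\tilde{z}),(\widetilde{Y},\widetilde{Z})\big),
\]
which is precisely the assertion, since $(\widetilde{Y},\widetilde{Z})$ is the solution of \eqref{eq:BML-linearized-BSDE}. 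Internally this reruns the same chain of equalities as in the proof of Theorem~\ref{th:BML-interpretation-simple-decoupled-FBSDE}: adding and subtracting $\widetilde{Y}_t$, using $\widetilde{Y}_t=\xi+\int_t^T\hat{f}_s\,ds-\int_t^T\widetilde{Z}_s\,dW_s$, and noting that the cross term vanishes in expectation because $\tilde{y}_t-\widetilde{Y}_t$ is $\mathcal{F}_t$-measurable and $\int_t^T(\widetilde{Z}_s-\tilde{z}_s)\,dW_s$ has zero conditional mean given $\mathcal{F}_t$, after which the It\^o isometry converts the remaining martingale term into $\int_t^T|\widetilde{Z}_s-\tilde{z}_s|^2\,ds$.

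There is no deep obstacle; the proposition is in essence a corollary of the linear case. The one point deserving explicit care — and the step I would highlight — is that the "known process" $\hat{f}$ now \emph{depends on the trial solution}, so one must confirm $\hat{f}\in L^2_{\mathcal{F}}(0,T;\mathbb{R}^m)$ (and hence that \eqref{eq:BML-linearized-BSDE} is well posed and $(\widetilde{Y},\widetilde{Z})\in\mathcal{M}[0,T]$) before invoking Theorem~\ref{th:BML-interpretation-simple-decoupled-FBSDE}; this is exactly where the hypothesis $f\in L^2_{\mathcal{F}}(0,T;W^{1,\infty}(\mathbb{R}^m\times\mathbb{R}^{m\times d};\mathbb{R}^m))$ together with $(\tilde{y},\tilde{z})\in\mathcal{M}[0,T]$ enters. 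A secondary, purely bookkeeping point is that $\mu$ is only assumed $\sigma$-finite; but since $t\mapsto\operatorname{\mathbb{E}}\{|\tilde{y}_t-\widetilde{Y}_t|^2+\int_t^T|\widetilde{Z}_s-\tilde{z}_s|^2\,ds\}$ is bounded on $[0,T]$, the $\mu$-integration in \eqref{eqdef:dist-yz-YZ} poses no issue and the argument of the linear case goes through verbatim.
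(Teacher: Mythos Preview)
Your proposal is correct and follows essentially the same approach as the paper's proof: freeze the generator at the trial solution by setting $\hat{f}_s\coloneqq f(s,\tilde{y}_s,\tilde{z}_s)$, verify $\hat{f}\in L^2_{\mathcal{F}}(0,T;\mathbb{R}^m)$ via the Lipschitz bound and the integrability of $(\tilde{y},\tilde{z})$, and then invoke Theorem~\ref{th:BML-interpretation-simple-decoupled-FBSDE} for the resulting simple linear BSDE. The paper's argument is identical up to notation (it writes $\tilde{f}_s$ and introduces an auxiliary $(\widehat{Y},\widehat{Z})$ before identifying it with $(\widetilde{Y},\widetilde{Z})$ by uniqueness).
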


The key to proving this proposition is to recognize that the BML value of a trial solution is relative to the FBSDE considered. Even for the same trial solution $(\tilde{y},\tilde{z})$, its definition of BML value may differ depending on the FBSDE under consideration. However, if two FBSDEs under consideration are related to each other through $(\tilde{y},\tilde{z})$, then the trial solution $(\tilde{y},\tilde{z})$ can have equal BML values for both FBSDEs.

\begin{proof}
Fix stochastic processes $\tilde{y}$ and $\tilde{z}$. Let
$\tilde{f}_s\coloneqq f(s,\tilde{y}_s,\tilde{z}_s)$, and consider the BSDE
\begin{equation}
\label{eq:proof-tmp-311}
\widehat{Y}_t = \xi + \int_t^T\tilde{f}_s\,ds - \int_t^T\widehat{Z}_s\,dW_s.
\end{equation}
We want to apply Theorem~\ref{th:BML-interpretation-simple-decoupled-FBSDE} to
equation \eqref{eq:proof-tmp-311}, which requires verifying  $\tilde{f}_s\in
L^2_{\mathcal{F}}(0,T;\mathbb{R}^n)$. Based on the assumption $f\in
L^2_{\mathcal{F}}(0,T;W^{1,\infty}(\mathbb{R}^m\times\mathbb{R}^{m\times
d};\mathbb{R}^m))$, and since $\tilde{y}$ and $\tilde{z}$ are both
$\mathbb{F}$-adapted processes, we conclude that $\tilde{f}_s$ is also
$\mathbb{F}$-adapted. Moreover,
$$\operatorname{\mathbb{E}}\int_0^T|\tilde{f}_s -f(s,0,0)|^2\,ds \leq L_f^2
\operatorname{\mathbb{E}}\int_0^T\bigl||\tilde{y}_s| + |\tilde{z}_s|\bigr|^2\,ds
< \infty.$$
Hence, we have $\tilde{f}_s-f(s,0,0)\in L^2_{\mathcal{F}}(0,T;\mathbb{R}^n)$,
which implies  $\tilde{f}_s\in L^2_{\mathcal{F}}(0,T;\mathbb{R}^n)$.
Therefore, according to
Theorem~\ref{th:BML-interpretation-simple-decoupled-FBSDE}, we have
$$\int_0^T \operatorname{\mathbb{E}} \left|\tilde{y}_t - \biggl(\xi +
\int_t^T\tilde{f}_s\,ds - \int_t^T\tilde{z}_s\,dW_s\biggr) \right|^2\mu(dt) =
\operatorname{dist}_\mu((\tilde{y},\tilde{z}),(\widehat{Y},\widehat{Z})).$$
Since $\tilde{f}_s=f(s,\tilde{y}_s,\tilde{z}_s)$, the left-hand side of the
equation is also equal to the BML value of the trial solution with respect to
FBSDE \eqref{eq:nonlinear-decoupled-FBSDE}. Finally, since $\xi\in
L^2_{\mathcal{F}_T}$ and $\tilde{f}_s\in L^2_{\mathcal{F}}(0,T;\mathbb{R}^n)$,
linear BSDE \eqref{eq:proof-tmp-311} has a unique solution, i.e.,
$\widehat{Y}=\widetilde{Y}$ and $\widehat{Z}=\widetilde{Z}$. Therefore, the
right-hand side of the equation is equal to
$\operatorname{dist}_\mu(\widetilde{Y},\widetilde{Z})$. This concludes the proof
of the proposition.
\end{proof}

\begin{remark}
In comparison to Theorem~\ref{th:BML-interpretation-simple-decoupled-FBSDE}, the conclusion of this proposition includes $(\widetilde{Y},\widetilde{Z})$ in the position of $(Y,Z)$ on the right-hand side of equation \eqref{eq:strict-eq-dist-BML}. It is expected because when computing the BML value of a trial solution to nonlinear FBSDE \eqref{eq:nonlinear-decoupled-FBSDE}, we substitute $(\tilde{y},\tilde{z})$ for the unknown $(Y,Z)$. It leads to the BML value of the trial solution being the same as its BML value to linear BSDE \eqref{eq:BML-linearized-BSDE}. Although the generators of these two BSDEs are different, when calculating the BML value, the integral term with respect to time is the same for both the simple BSDE \eqref{eq:proof-tmp-311} and the nonlinear BSDE \eqref{eq:nonlinear-decoupled-FBSDE}, given by
$$
\int_t^T\tilde{f}_s\,ds = \int_t^Tf(s,\tilde{y}_s,\tilde{z}_s)\,ds,
$$ 
while the other terms are completely identical. Based on the conclusion from the previous section, in this case, the BML value is necessarily equal to the distance between the trial solution and the solution of BSDE \eqref{eq:BML-linearized-BSDE}.

It is important to note that even though the BML value in this case equals the distance between the trial solution $(\tilde{y},\tilde{z})$ and another pair of stochastic processes $(\widetilde{Y},\widetilde{Z})$, optimizing this BML value does not necessarily lead to the convergence to $(\widetilde{Y},\widetilde{Z})$ and ultimately deviate from the true solution $(Y,Z)$. Since $(\widetilde{Y},\widetilde{Z})$ depends on the current trial solution $(\tilde{y},\tilde{z})$, any changes in the trial solution will also change $(\widetilde{Y},\widetilde{Z})$, so there is no fixed value for it. In fact, if $(\tilde{y},\tilde{z})$ does converge to an optimal solution $(\tilde{y}^*,\tilde{z}^*)$, the corresponding $(\widetilde{Y},\widetilde{Z})$ will converge to the desired true solution $(Y,Z)$, as stated in Theorem~\ref{th:BML-interpretation-nonlinear-decoupled-FBSDE}.
\end{remark}

Now, we explain the BML value in the nonlinear case from the perspective of fixed-point iteration. When the assumptions of Proposition \ref{prop:BML-interpretation-nonlinear-decoupled-FBSDE} hold, for any pair of stochastic processes $(\tilde{y},\tilde{z})\in\mathcal{M}[0,T]$, the linear BSDE \eqref{eq:BML-linearized-BSDE} has a unique solution $(\widetilde{Y},\widetilde{Z})$. Thus, we can define a mapping $\Phi$ on the space $\mathcal{M}[0,T]$ that maps $(\tilde{y},\tilde{z})$ to the unique solution $(\widetilde{Y},\widetilde{Z})$ of linear BSDE \eqref{eq:BML-linearized-BSDE}. According to this definition, the solution $(Y,Z)$ of FBSDE \eqref{eq:nonlinear-decoupled-FBSDE} is necessarily a fixed point of mapping $\Phi$. Conversely, any fixed point of mapping $\Phi$ must satisfy the backward equations in FBSDE \eqref{eq:nonlinear-decoupled-FBSDE}. It indicates that under certain assumptions, such as the existence and uniqueness of solutions to the original backward equations, mapping $\Phi$ has a unique fixed point. In fact, the assumptions on terminal condition $\xi$ and generator $f$ in Proposition \ref{prop:BML-interpretation-nonlinear-decoupled-FBSDE} are already sufficient. In this case, it can be proven that there exists a norm $\|\cdot\|_\beta$ on $\mathcal{M}[0,T]$ such that mapping $\Phi$ is a strict contraction relative to this metric \cite[Theorem 7.3.2]{yong_zhou_1999}. Therefore, according to the fixed-point theorem, $\Phi$ has a unique fixed point corresponding to the solution $(Y,Z)$ of the backward equations.

By using the definition of mapping $\Phi$ in the proposition, we can replace $(\widetilde{Y},\widetilde{Z})$ with $\Phi(\tilde{y},\tilde{z})$ in the
conclusion of Proposition \ref{prop:BML-interpretation-nonlinear-decoupled-FBSDE}, which gives us equation \eqref{eq:indirect-eq-dist-BML} mentioned earlier in Section \ref{ssec:simple-decoupled-FBSDE}. From this perspective, the BML value of the trial solution $(\tilde{y},\tilde{z})$ is the distance between it and $\Phi(\tilde{y},\tilde{z})$! Once this distance is minimized to zero, it is equivalent to finding a fixed point of $\Phi$, which means we have found the solution to the original backward equations.

\begin{theorem}
  \label{th:BML-interpretation-nonlinear-decoupled-FBSDE}
Let $\mu$ be a $\sigma$-finite measure on $[0,T]$.
Consider the backward equations in FBSDE \eqref{eq:nonlinear-decoupled-FBSDE},
and let $(Y,Z)$ denote its solution.
Assume terminal condition $\xi$ and generator $f$ satisfy the
assumptions of Proposition
\ref{prop:BML-interpretation-nonlinear-decoupled-FBSDE}. If
$(\tilde{y}^*,\tilde{z}^*)\in\mathcal{M}[0,T]$ is a minimizer of the
optimization problem
\begin{equation}
  \label{op:BML-mu}
  \min_{\tilde{y},\tilde{z}}\quad
  \operatorname{BML}(\tilde{y},\tilde{z};\mu),
\end{equation}
then it holds that
$$ \operatorname{BML}(\tilde{y}^*,\tilde{z}^*;\mu) =
\operatorname{dist}_\mu((\tilde{y}^*,\tilde{z}^*), (Y,Z)) = 0$$
with
$\operatorname{dist}_\mu$ defined by equation \eqref{eqdef:dist-yz-YZ}.
\end{theorem}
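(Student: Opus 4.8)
The plan is to reduce the statement to the fixed-point map $\Phi$ of Proposition~\ref{prop:BML-interpretation-nonlinear-decoupled-FBSDE}, which sends $(\tilde{y},\tilde{z})$ to the unique solution of the linear BSDE~\eqref{eq:BML-linearized-BSDE} and whose unique fixed point is the true solution $(Y,Z)$. Concretely, I would: first show that the optimal value of \eqref{op:BML-mu} equals $0$ and is attained at $(Y,Z)$; then use Proposition~\ref{prop:BML-interpretation-nonlinear-decoupled-FBSDE} to turn a minimizer into a fixed point of $\Phi$; and finally invoke uniqueness of that fixed point.

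For the first step, I would note that $(Y,Z)\in\mathcal{M}[0,T]$, and that since $(Y,Z)$ solves the backward equation in \eqref{eq:nonlinear-decoupled-FBSDE} it is a fixed point of $\Phi$ (the linear BSDE~\eqref{eq:BML-linearized-BSDE} with $(\tilde{y},\tilde{z})=(Y,Z)$ has $(Y,Z)$ itself as its unique solution). Proposition~\ref{prop:BML-interpretation-nonlinear-decoupled-FBSDE} then yields $\operatorname{BML}(Y,Z;\mu)=\operatorname{dist}_\mu((Y,Z),(Y,Z))=0$. Since $\operatorname{BML}(\cdot,\cdot;\mu)$ is an integral of expected squared norms against the nonnegative measure $\mu$, it is always nonnegative; hence $0$ is the infimum in \eqref{op:BML-mu}, and any minimizer $(\tilde{y}^*,\tilde{z}^*)$ must satisfy $\operatorname{BML}(\tilde{y}^*,\tilde{z}^*;\mu)=0$.

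For the second step, I would apply Proposition~\ref{prop:BML-interpretation-nonlinear-decoupled-FBSDE} to $(\tilde{y}^*,\tilde{z}^*)$ and set $(\widetilde{Y},\widetilde{Z})\coloneqq\Phi(\tilde{y}^*,\tilde{z}^*)$. Then $\operatorname{dist}_\mu((\tilde{y}^*,\tilde{z}^*),(\widetilde{Y},\widetilde{Z}))=0$, and by the definition \eqref{eqdef:dist-yz-YZ} this forces $\operatorname{\mathbb{E}}|\tilde{y}^*_t-\widetilde{Y}_t|^2=0$ and $\operatorname{\mathbb{E}}\int_t^T|\tilde{z}^*_s-\widetilde{Z}_s|^2\,ds=0$ for $\mu$-almost every $t$. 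Because $t\mapsto\operatorname{\mathbb{E}}\int_t^T|\tilde{z}^*_s-\widetilde{Z}_s|^2\,ds$ is nonincreasing, letting $t\downarrow0$ along the full-$\mu$-measure set of such $t$ (which accumulates at $0$ once $0\in\operatorname{supp}\mu$) gives $\tilde{z}^*=\widetilde{Z}$ in $L^2_{\mathcal{F}}(0,T;\mathbb{R}^{m\times d})$; and since $\tilde{y}^*$ and the BSDE solution $\widetilde{Y}$ both have almost surely continuous paths, the a.s.\ identity $\tilde{y}^*_t=\widetilde{Y}_t$ — valid for all $t$ in a set dense in $[0,T]$ — extends to every $t\in[0,T]$. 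Hence $(\tilde{y}^*,\tilde{z}^*)=\Phi(\tilde{y}^*,\tilde{z}^*)$.

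For the third step, a fixed point of $\Phi$ solves the backward equation in \eqref{eq:nonlinear-decoupled-FBSDE}, and under the hypotheses of Proposition~\ref{prop:BML-interpretation-nonlinear-decoupled-FBSDE} that solution is unique because $\Phi$ is a strict contraction for a suitable weighted norm $\|\cdot\|_\beta$ on $\mathcal{M}[0,T]$ \cite[Theorem 7.3.2]{yong_zhou_1999}. Therefore $(\tilde{y}^*,\tilde{z}^*)=(Y,Z)$, so $\operatorname{dist}_\mu((\tilde{y}^*,\tilde{z}^*),(Y,Z))=0$, which together with the first step proves the theorem. The first and third steps are routine bookkeeping on top of Proposition~\ref{prop:BML-interpretation-nonlinear-decoupled-FBSDE} and the cited contraction property; I expect the delicate point to be the second step, namely passing from the vanishing of $\operatorname{dist}_\mu((\tilde{y}^*,\tilde{z}^*),\Phi(\tilde{y}^*,\tilde{z}^*))$ to the genuine process identity $(\tilde{y}^*,\tilde{z}^*)=\Phi(\tilde{y}^*,\tilde{z}^*)$. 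Since $\sqrt{\operatorname{dist}_\mu}$ is only a pseudometric in general, this step needs the path-continuity of trial solutions and of BSDE solutions together with structural facts about $\mu$ — that $0\in\operatorname{supp}\mu$ (to recover $\tilde{z}^*$ on all of $[0,T]$) and that $\operatorname{supp}\mu$ is dense in $[0,T]$ (to recover $\tilde{y}^*$ at every time); for the canonical choices $\mu=dt$ and $\mu=e^{-\gamma t}\,dt$ these hold automatically, whereas for a $\mu$ supported on a proper subset the identification is only partial and the conclusion should be read modulo the components that $\operatorname{dist}_\mu$ can detect.
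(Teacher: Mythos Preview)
Your Step~1 is correct and matches the paper exactly: $(Y,Z)$ is a fixed point of $\Phi$, so $\operatorname{BML}(Y,Z;\mu)=0$, and hence any minimizer has zero BML value.

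The real divergence from the paper is in Step~2, and it is also where your argument falls short of the theorem as stated. You try to upgrade $\operatorname{dist}_\mu((\tilde{y}^*,\tilde{z}^*),\Phi(\tilde{y}^*,\tilde{z}^*))=0$ to the genuine identity $(\tilde{y}^*,\tilde{z}^*)=\Phi(\tilde{y}^*,\tilde{z}^*)$ in $\mathcal{M}[0,T]$, then invoke uniqueness of the fixed point. As you yourself note, this upgrade requires $0\in\operatorname{supp}\mu$ and $\operatorname{supp}\mu$ dense in $[0,T]$. But the theorem is stated for an arbitrary $\sigma$-finite $\mu$, and the paper explicitly emphasizes (see the remark following Theorem~\ref{th:BML-interpretation-fully-coupled-nonlinear-FBSDE}) that in the decoupled case no equivalence-to-Lebesgue hypothesis is needed. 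In particular the Dirac choice $\mu=\delta_0$, one of the three measures the paper actually computes with, is not covered by your argument: from $\operatorname{dist}_{\delta_0}=0$ you only learn $\tilde{y}^*_0=\widetilde{Y}_0$ and $\tilde{z}^*=\widetilde{Z}$, with no control on $\tilde{y}^*_s$ for $s>0$, so you cannot conclude $(\tilde{y}^*,\tilde{z}^*)$ is a fixed point.

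The paper avoids this altogether. Instead of passing through a process identity and the contraction fixed point, it proves a dedicated lemma (Lemma~\ref{lem:BML-interpretation-nonlinear-decoupled-FBSDE}) asserting directly that
\[
\operatorname{dist}_\mu((\tilde{y},\tilde{z}),\Phi(\tilde{y},\tilde{z}))=0
\quad\Longleftrightarrow\quad
\operatorname{dist}_\mu((\tilde{y},\tilde{z}),(Y,Z))=0,
\]
for the same $\mu$. The nontrivial direction is proved by a BSDE a~priori estimate: apply It\^o's formula to $e^{\gamma s}|Y_s-\widetilde{Y}_s|^2$ with $\gamma=1+8L_f^2$, obtain a pointwise-in-$t$ inequality bounding the weighted $(Y-\widetilde{Y},Z-\widetilde{Z})$ energy on $[t,T]$ by $\tfrac14$ of the weighted $(Y-\tilde{y},Z-\tilde{z})$ energy, integrate in $\mu(dt)$, and use $\operatorname{dist}_\mu((\tilde{y},\tilde{z}),(\widetilde{Y},\widetilde{Z}))=0$ to replace $(\tilde{y},\tilde{z})$ by $(\widetilde{Y},\widetilde{Z})$ on the right and absorb. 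This stays entirely at the level of the pseudometric $\operatorname{dist}_\mu$ and never asks whether $(\tilde{y}^*,\tilde{z}^*)$ literally equals $(Y,Z)$ as processes. That is the missing idea in your plan: replace the ``upgrade to identity $+$ uniqueness'' step by this weighted energy estimate, and the support hypotheses on $\mu$ disappear.
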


\begin{remark}
This theorem provides the theoretical foundation for using optimization methods to solve decoupled FBSDEs. It states that if a minimizer $(\tilde{y},\tilde{z})$ of optimization problem \eqref{op:BML-mu} is found, then $(\tilde{y},\tilde{z})$ is the solution to the original equation. For fully coupled FBSDEs, please refer to the generalized version of this theorem, which is presented in the next subsection as Theorem \ref{th:BML-interpretation-fully-coupled-nonlinear-FBSDE}.
\end{remark}

Before presenting the proof of Theorem~\ref{th:BML-interpretation-nonlinear-decoupled-FBSDE},
we first establish the following lemma.

\begin{lemma}
    \label{lem:BML-interpretation-nonlinear-decoupled-FBSDE}
    Under the assumptions of
    Theorem~\ref{th:BML-interpretation-nonlinear-decoupled-FBSDE}, if
    $(\tilde{y},\tilde{z})\in\mathcal{M}[0,T]$, then
$$ \operatorname{dist}_\mu((\tilde{y},\tilde{z}),
    \Phi(\tilde{y},\tilde{z})) = 0\quad \iff\quad
    \operatorname{dist}_\mu((\tilde{y},\tilde{z}), (Y,Z)) = 0.$$
\end{lemma}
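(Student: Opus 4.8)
The plan is to show that \emph{each} of the two conditions appearing in the claimed equivalence is, in turn, equivalent to the single statement ``$(\tilde y,\tilde z)$ is a solution of the backward equation in \eqref{eq:nonlinear-decoupled-FBSDE}''; the lemma then follows at once by transitivity. Two facts are used throughout: first, by Proposition~\ref{prop:BML-interpretation-nonlinear-decoupled-FBSDE} and equation~\eqref{eq:indirect-eq-dist-BML}, $\operatorname{dist}_\mu((\tilde y,\tilde z),\Phi(\tilde y,\tilde z))=\operatorname{BML}(\tilde y,\tilde z;\mu)$, so the left-hand condition is simply $\operatorname{BML}(\tilde y,\tilde z;\mu)=0$; second, under the standing hypotheses the backward equation in~\eqref{eq:nonlinear-decoupled-FBSDE} has a \emph{unique} solution, namely $(Y,Z)$ (this is the $\|\cdot\|_\beta$-contraction of $\Phi$ recalled before the statement).

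First I would unpack $\operatorname{BML}(\tilde y,\tilde z;\mu)=0$. As the integrand defining the BML is nonnegative, it vanishes for $\mu$-a.e.\ $t$, i.e.
\[
\tilde y_t \;=\; \xi+\int_t^T f(s,\tilde y_s,\tilde z_s)\,ds-\int_t^T\tilde z_s\,dW_s
\qquad\text{a.s., for }\mu\text{-a.e. }t .
\]
For almost every $\omega$, both sides are continuous functions of $t$ on $[0,T]$ ($\tilde y$ is continuous by hypothesis; $t\mapsto\int_t^T f(s,\tilde y_s,\tilde z_s)\,ds$ and $t\mapsto\int_t^T\tilde z_s\,dW_s$ are continuous since $f(\cdot,\tilde y,\tilde z)\in L^2_{\mathcal F}(0,T;\mathbb R^m)$, by the bound established in the proof of Proposition~\ref{prop:BML-interpretation-nonlinear-decoupled-FBSDE}, and $\tilde z\in L^2_{\mathcal F}(0,T;\mathbb R^{m\times d})$). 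Choosing a countable set $D$ of times that is dense in $[0,T]$ and on each of which the identity holds a.s.\ (possible because, when $\operatorname{supp}\mu=[0,T]$, the $\mu$-conull set of ``good'' times is dense), and discarding the union of the associated null sets, one gets that the identity holds simultaneously at all $t\in D$; continuity then extends it to all $t\in[0,T]$. Thus $(\tilde y,\tilde z)$ solves the backward equation in~\eqref{eq:nonlinear-decoupled-FBSDE}. The reverse implication is immediate: if $(\tilde y,\tilde z)$ solves it, the quantity subtracted from $\tilde y_t$ inside the BML integrand equals $\tilde y_t$ for every $t$, so $\operatorname{BML}(\tilde y,\tilde z;\mu)=0$.

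Next I would treat the right-hand condition. By~\eqref{eqdef:dist-yz-YZ}, $\operatorname{dist}_\mu((\tilde y,\tilde z),(Y,Z))=0$ iff $\tilde y_t=Y_t$ a.s.\ for $\mu$-a.e.\ $t$ and $\int_t^T\mathbb E|\tilde z_s-Z_s|^2\,ds=0$ for $\mu$-a.e.\ $t$. The second clause, being continuous and nonincreasing in $t$ and zero on a dense set of $t$, is zero at $t=0$, i.e.\ $\tilde z=Z$ in $L^2_{\mathcal F}(0,T;\mathbb R^{m\times d})$; the first, by the continuity/density argument of the previous paragraph, gives $\tilde y_t=Y_t$ for all $t$. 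Since $(Y,Z)$ solves the backward equation and $\tilde y,\tilde z$ coincide with $Y,Z$ in these senses (a null-set change of the integrand affects neither the Lebesgue nor the It\^{o} integral), $(\tilde y,\tilde z)$ solves the backward equation as well. Conversely, if $(\tilde y,\tilde z)$ solves it, uniqueness yields $(\tilde y,\tilde z)=(Y,Z)$ and hence $\operatorname{dist}_\mu((\tilde y,\tilde z),(Y,Z))=0$. Chaining the two equivalences closes the proof.

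The step I expect to be the crux is the upgrade from ``for $\mu$-a.e.\ $t$'' to ``for all $t$'' in the $\tilde y$-component, which is needed in both directions. This is precisely where continuity of the trial process $\tilde y$ is used and---more delicately---where one needs $\operatorname{supp}\mu=[0,T]$ (so that a $\mu$-conull set of times is dense in $[0,T]$); this holds for the Lebesgue measure and for the exponentially weighted measure $e^{-\gamma t}\,dt$ used in the paper. If instead $\mu$ is supported on a proper subset of $[0,T]$ (e.g.\ a Dirac mass) then $\sqrt{\operatorname{dist}_\mu}$ is only a pseudometric and the equivalence must be read modulo its null space, so the statement is best understood for full-support $\mu$. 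Apart from this point the argument is routine: a nonnegative integrand of a vanishing integral vanishes a.e., It\^{o} and Lebesgue integrals are insensitive to null modifications of the integrand, and Lipschitz BSDEs have unique solutions---facts that are elementary or already invoked in the paper.
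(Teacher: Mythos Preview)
Your approach is genuinely different from the paper's. The paper never routes through the intermediate statement ``$(\tilde y,\tilde z)$ is a solution of the BSDE''; instead it proves each implication by direct $L^2$ estimation. For $\Leftarrow$ it inserts $Y_t$ into the BML integrand, replaces $Y_t$ by the backward equation for $Y$, and bounds the resulting $f$-difference via the Lipschitz constant of $f$. For $\Rightarrow$ it applies It\^o's formula to $e^{\gamma s}|Y_s-\widetilde Y_s|^2$ with $\gamma=1+8L_f^2$, obtaining an a priori energy inequality which, after integration in $\mu(dt)$ and a triangle-inequality splitting of $|Y_s-\tilde y_s|$, lets the right-hand side be absorbed into the left.

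Your route is more conceptual and sidesteps the It\^o computation entirely; the price is the explicit full-support hypothesis on $\mu$, which you rightly flag. That hypothesis is not a mere technicality: with $\mu=\delta_0$, $f(s,y,z)=y$, $\xi=0$, $T=1$ (so $Y\equiv Z\equiv0$), the pair $\tilde y_t=t$, $\tilde z\equiv0$ satisfies $\operatorname{dist}_{\delta_0}((\tilde y,\tilde z),(Y,Z))=0$ yet $\operatorname{BML}(\tilde y,\tilde z;\delta_0)=\tfrac14$, so the $\Leftarrow$ implication actually fails. The paper's estimates run into the same obstruction---in both directions they require quantities of the form $\int_0^T\!\int_t^T|\tilde y_s-\widetilde Y_s|^2\,ds\,\mu(dt)$ (or the analogue with $Y$) to vanish, which does not follow from $\operatorname{dist}_\mu=0$ alone without some density of the $\mu$-conull times. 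So your caveat is the honest one; under the full-support assumption your argument is complete and arguably cleaner than the paper's, while the paper's energy-estimate machinery is the standard BSDE toolkit and would be the natural starting point if one wanted quantitative stability bounds rather than just the zero/nonzero dichotomy.
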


The basic idea of the proof is to establish the existence and uniqueness of the solution, which is equivalent to proving the uniqueness of a fixed point for mapping $\Phi$. It is foreseeable because if $\Phi$ has multiple fixed points, then the aforementioned equality may not hold since $(\tilde{y},\tilde{z})$ can take on solutions different from $(Y,Z)$.

\begin{proof}[Proof of Lemma \ref{lem:BML-interpretation-nonlinear-decoupled-FBSDE}]
First, we prove the ``$\Leftarrow$'' part. According to Proposition
\ref{prop:BML-interpretation-nonlinear-decoupled-FBSDE}, we have
\begin{equation}
\begin{aligned}
\operatorname{dist}_\mu((\tilde{y},\tilde{z}), \Phi(\tilde{y},\tilde{z})) =& \operatorname{BML}(\tilde{y},\tilde{z};\mu) \\
=& \int_0^T \operatorname{\mathbb{E}} \left|\tilde{y}_t - \biggl(\xi + \int_t^T
f(s,\tilde{y}_s,\tilde{z}_s)\,ds - \int_t^T \tilde{z}_s\,dW_s\biggr)
\right|^2\mu(dt) \\
\leq& 2 \operatorname{\mathbb{E}}\int_0^T \left|Y_t - \biggl(\xi + \int_t^T
f(s,\tilde{y}_s,\tilde{z}_s)\,ds - \int_t^T \tilde{z}_s\,dW_s\biggr)
\right|^2\mu(dt) \\
&+ 2 \operatorname{\mathbb{E}}\int_0^T |\tilde{y}_t-Y_t|^2\,\mu(dt).
\end{aligned}
\label{eq:proof-tmp-estimation101}
\end{equation}
For the second term on the right-hand side of inequality
\eqref{eq:proof-tmp-estimation101} and according to the assumption, we have
\begin{align*}
\operatorname{\mathbb{E}}\int_0^T|\tilde{y}_t-Y_t|^2\,\mu(dt) \leq \operatorname{\mathbb{E}}\int_0^T\left\{|\tilde{y}_t-Y_t|^2 + \int_t^T|\tilde{z}_s-Z_s|^2\,ds \right\}\,\mu(dt) = \operatorname{dist}_\mu((\tilde{y},\tilde{z}), (Y,Z)) = 0.
\end{align*}
Similarly, we have
$$ \operatorname{\mathbb{E}}\int_0^T\int_t^T|\tilde{z}_s - Z_s|^2\,ds\,\mu(dt) = 0.$$
For the first term on the right-hand side of inequality
\eqref{eq:proof-tmp-estimation101}, we can estimate it using the BSDE satisfied
by $Y$
\begin{align*}
& \operatorname{\mathbb{E}}\int_0^T \left|Y_t - \biggl(\xi +
  \int_t^Tf(s,\tilde{y}_s,\tilde{z}_s)\,ds -
  \int_t^T\tilde{z}_s\,dW_s\biggr) \right|^2\mu(dt)\\ = &\,
  \operatorname{\mathbb{E}}\int_0^T \left|\biggl(\xi +
  \int_t^Tf(s,Y_s,Z_s)\,ds
  -\int_t^TZ_s\,dW_s\biggr)\right.\\ &\phantom{2
    \operatorname{\mathbb{E}}\int_0^T\biggl|}-\biggl(\xi +
  \left.\int_t^Tf(s,\tilde{y}_s,\tilde{z}_s)\,ds
  -\int_t^T\tilde{z}_s\,dW_s\biggr)\right|^2\,\mu(dt)\\ \leq&\,2\operatorname{\mathbb{E}}\int_0^T\biggl\{
  \int_t^T[f(s,Y_s,Z_s) - f(s,\tilde{y}_s,\tilde{z}_s)]\,ds
  \biggr\}^2\,\mu(dt) +
  2\operatorname{\mathbb{E}}\int_0^T\biggl\{\int_t^T[Z_s-\tilde{z}_s]\,dW_s\biggr\}^2\,\mu(dt)\\ \leq&\,2L_f^2\operatorname{\mathbb{E}}\int_0^T
  \int_t^T\bigl(|Y_s-\tilde{y}_s|+|Z_s-\tilde{z}_s|\bigr)^2\,ds
  \,\mu(dt) \\ \leq&\,4L_f^2
  \operatorname{\mathbb{E}}\int_0^T\int_t^T\bigl(|Y_s-\tilde{y}_s|^2 +
  |Z_s-\tilde{z}_s|^2\bigr)\,ds\,\mu(dt)\\ =&\,0.
\end{align*}

Then, we proceed to prove the "$\Rightarrow$" part. Let
$(\widetilde{Y},\widetilde{Z})=\Phi(\tilde{y},\tilde{z})$, and define
$$ \overline{Y}=Y-\widetilde{Y},\quad\overline{Z}=Z-\widetilde{Z},\quad
\bar{f}_s=f(s,Y_s,Z_s)-f(s,\tilde{y}_s,\tilde{z}_s). $$
We have the following differential relation
$$d\overline{Y}_t = -\bar{f}_s\,dt + \overline{Z}_t\,dW_t.$$
Let $\gamma=1+8L_f^2$. Applying It\^o's formula to $e^{\gamma s}|\overline{Y}_s|^2$, we obtain
$$ de^{\gamma s}|\overline{Y}_t|^2 = \bigl[ \gamma e^{\gamma
      t}|\overline{Y}_t|^2 + e^{\gamma t}|\overline{Z}_t|^2-\langle
    \bar{f}_t,2e^{\gamma t}\overline{Y}_t\rangle \bigr]\,dt +
\langle2e^{\gamma t}\overline{Y}_t,\overline{Z}_t\,dW_t\rangle. $$
Integrating over the interval $[t,T]$ and taking into account that
$\overline{Y}_T=0$ and the expected value of the stochastic integral
$\int_0^T\langle2e^{\gamma t}\overline{Y}_t,\overline{Z}_t\,dW_t\rangle$
is zero, we have
\begin{align*}
 \operatorname{\mathbb{E}}e^{\gamma t}|\overline{Y}_t|^2 +
 \operatorname{\mathbb{E}}\int_t^T[\gamma e^{\gamma
     s}|\overline{Y}_s|^2+e^{\gamma s}|\overline{Z}_s|^2]\,ds &=
 \operatorname{\mathbb{E}}\int_t^T\langle \bar{f}_s,2e^{\gamma
   s}\overline{Y}_s\rangle\,ds\\ &\leq 2L_f
 \operatorname{\mathbb{E}}\int_t^T e^{\gamma
   s}|\overline{Y}_s|\bigl(|Y_s-\tilde{y}_s|+|Z_s-\tilde{z}_s|\bigr)\,ds\\ &\leq
 2L_f \operatorname{\mathbb{E}}\int_t^T e^{\gamma s}\cdot
 4L_f|\overline{Y}_s|^2\,ds\\ &\hphantom{\leq~}+2L_f
 \operatorname{\mathbb{E}}\int_t^Te^{\gamma s}\cdot
 \frac{1}{16L_f}\bigl(|Y_s-\tilde{y}_s|+|Z_s-\tilde{z}_s|\bigr)^2\,ds\\ &\leq
 8L_f^2\operatorname{\mathbb{E}}\int_t^T e^{\gamma
   s}|\overline{Y}_s|^2\,ds\\ &\hphantom{\leq~}+
 \frac{1}{4}\operatorname{\mathbb{E}}\int_t^Te^{\gamma
   s}\bigl[|Y_s-\tilde{y}_s|^2+|Z_s-\tilde{z}_s|^2\bigr]\,ds.
\end{align*}
By subtracting $8L_f^2\operatorname{\mathbb{E}}\int_t^T e^{\gamma s}|\overline{Y}_s|^2\,ds$
from both sides and considering that $\gamma=1+8L_f^2$, we obtain
\begin{equation}
\label{eq:proof-tmp-303}
\begin{aligned}
\operatorname{\mathbb{E}} e^{\gamma
  t}|Y_t-\widetilde{Y}_t|^2+&\operatorname{\mathbb{E}}\int_t^Te^{\gamma
  s}\bigl[|Y_s-\widetilde{Y}_s|^2 +
  |Z_s-\widetilde{Z}_s|^2\bigr]\,ds\\ & \leq \frac{1}{4}
\operatorname{\mathbb{E}}\int_t^Te^{\gamma
  s}\bigl[|Y_s-\tilde{y}_s|^2+|Z_s-\tilde{z}_s|^2\bigr]\,ds.
\end{aligned}
\end{equation}

On the other hand, from the assumption
$\operatorname{dist}_\mu((\tilde{y},\tilde{z}),(\widetilde{Y},\widetilde{Z}))=0$,
we have
$$ \operatorname{\mathbb{E}}\int_0^T\int_0^T|\tilde{y}_s-\widetilde{Y}_s|^2\,ds\,\mu(dt)=0,\qquad
\operatorname{\mathbb{E}}\int_0^T\int_t^T|\tilde{z}_s-\widetilde{Z}_s|^2\,ds\,\mu(dt)=0. $$
Hence, integrating both sides of inequality~\eqref{eq:proof-tmp-303}, we can further simplify it as follows
\begin{equation*}
\begin{aligned}
\operatorname{\mathbb{E}} \int_0^Te^{\gamma
  t}|Y_t-\widetilde{Y}_t|^2\,\mu(dt)+&\operatorname{\mathbb{E}}\int_0^T\int_t^Te^{\gamma
  s}\bigl[|Y_s-\widetilde{Y}_s|^2 +
  |Z_s-\widetilde{Z}_s|^2\bigr]\,ds\,\mu(dt)\\ & \leq \frac{1}{2}
\operatorname{\mathbb{E}}\int_0^T\int_t^Te^{\gamma
  s}\bigl[|Y_s-\widetilde{Y}_s|^2+|Z_s-\widetilde{Z}_s|^2\bigr]\,ds\,\mu(dt).
\end{aligned}
\end{equation*}
Thus, we have
$$ \operatorname{\mathbb{E}} \int_0^Te^{\gamma
  t}|Y_t-\widetilde{Y}_t|^2\,\mu(dt) = 0,\quad
\operatorname{\mathbb{E}}\int_0^T\int_t^Te^{\gamma
  s}|Z_s-\widetilde{Z}_s|^2\,ds\,\mu(dt) = 0. $$
This is equivalent to
$$ \operatorname{\mathbb{E}} \int_0^T|Y_t-\widetilde{Y}_t|^2\,\mu(dt)
= 0,\quad \operatorname{\mathbb{E}}\int_0^T\int_t^T|Z_s-\widetilde{Z}_s|^2\,ds\,\mu(dt)
= 0.$$
Adding these equations gives
$\operatorname{dist}_\mu((\widetilde{Y},\widetilde{Z}),(Y,Z))=0$.

In conclusion, the lemma statement holds true.
\end{proof}

\begin{proof}[Proof of Theorem~\ref{th:BML-interpretation-nonlinear-decoupled-FBSDE}]
First, according to the lemma above, the BML of the true solution must be
zero, since $\operatorname{dist}_\mu((Y,Z), \Phi(Y,Z))=0$. By
Proposition~\ref{prop:BML-interpretation-nonlinear-decoupled-FBSDE}, we have
$\operatorname{BML}(Y,Z;\mu)=0$. Therefore, $(Y,Z)$ is a minimizer of the
optimization problem~\eqref{op:BML-mu}. 

On the other hand, because $(\tilde{y}^*,\tilde{z}^*)$ is also a minimizer, we have
$$ \operatorname{BML}(\tilde{y}^*,\tilde{z}^*;\mu) \leq
  \operatorname{BML}(Y,Z;\mu)=0.$$
This implies that the BML value of $(\tilde{y}^*,\tilde{z}^*)$ is also zero.
Applying Proposition~\ref{prop:BML-interpretation-nonlinear-decoupled-FBSDE} and
Lemma~\ref{lem:BML-interpretation-nonlinear-decoupled-FBSDE} again, we obtain
$$0=\operatorname{BML}(\tilde{y}^*,\tilde{z}^*;\mu)=\operatorname{dist}_\mu((\tilde{y}^*,\tilde{z}^*),(Y,Z)).$$
Hence, the theorem is proved.
\end{proof}

Before concluding this section, we emphasize the difference between optimizing the BML value and fixed-point iteration. Proposition~\ref{prop:BML-interpretation-nonlinear-decoupled-FBSDE} shows that the BML value of a trial solution $(\tilde{y},\tilde{z})$ is equal to its distance from the next point $\Phi(\tilde{y},\tilde{z})$ in the fixed-point iteration. Furthermore, Theorem~\ref{th:BML-interpretation-nonlinear-decoupled-FBSDE} states that when the BML value is zero, the trial solution coincides with the true solution. Therefore, optimizing BML can be viewed as a method for solving the FBSDE.

On the other hand, the theoretical foundation of fixed-point iteration lies in the fact that the distance between a trial solution and its mapping
$\Phi(\tilde{y},\tilde{z})$ will decrease as long as $\Phi$ is a contraction mapping. Under the standard assumptions (as given in the hypothesis of Theorem~\ref{th:BML-interpretation-nonlinear-decoupled-FBSDE}), the contraction property of mapping $\Phi$ is guaranteed. Therefore, by iterating the scheme
$$ 
(\tilde{y}^{(k+1)},\tilde{z}^{(k+1)}) = \Phi(\tilde{y}^{(k)},\tilde{z}^{(k)}), 
$$
it is ensured that as $k$ tends to infinity, $(\tilde{y}^{(k+1)},\tilde{z}^{(k+1)})$ tends to a fixed point of $\Phi$.

However, the fixed-point iteration explicitly applies the mapping $\Phi$, meaning that we need to solve a BSDE at each iteration step, even if it is a simple linear equation like \eqref{eq:BML-linearized-BSDE}. On the other hand, if we choose to optimize the BML, we can avoid this. The computation of BML value is based on definition \eqref{eqdef:fully-coupled-nonlinear-BML}, which only requires substituting the trial solution $(\tilde{y},\tilde{z})$ into it. By optimizing the BML, we bypass the need to solve $\Phi(\tilde{y}^{(k)},\tilde{z}^{(k)})$ at each iteration step and instead choose to optimize $\operatorname{dist}_\mu((\tilde{y}^{(k)},\tilde{z}^{(k)}),\Phi(\tilde{y}^{(k)},\tilde{z}^{(k)}))$. In other words, this can be understood as solving the fixed-point equation $x=f(x)$ and choosing to optimize $|x-f(x)|$.

\subsection{General Coupled FBSDE}

In this subsection, we consider equation \eqref{eq:fully-coupled-nonlinear-FBSDE} itself, namely, the coupled nonlinear
FBSDE, where the forward equation depends on the solution of the backward equation, and vice
versa. Unlike the special case considered in the previous subsection, 
it is not possible to simply consider the forward and backward equations
separately for the coupled nonlinear FBSDE. However, given a trial solution
$(\tilde{y},\tilde{z})$ for the backward equation, we can still substitute it
into the position of $(Y,Z)$ in the forward equation, obtaining SDE
\eqref{eq:BML-tilde-X}. This SDE no longer contains the solution $(X,Y,Z)$ of
the original equations, so we can apply the Euler-Maruyama discretization scheme
to solve it and obtain the stochastic process $\widetilde{X}$. Then, we
substitute $\widetilde{X}$ into the position of $X$ in the backward equation,
resulting in a new \textit{linear} BSDE. This new BSDE also no longer contains the
solution $(X,Y,Z)$ of the original equations. In other words, given a trial
solution $(\tilde{y},\tilde{z})\in\mathcal{M}[0,T]$, we introduce the following
decoupled simplified FBSDE
\begin{equation}
\label{eq:manually-decoupled-nonlinear-FBSDE}
\left\{
\begin{aligned}
\widetilde{X}_t &= x_0 +
\int_0^tb(s,\widetilde{X}_s,\tilde{y}_s,\tilde{z}_s)\,ds +
\int_0^t\sigma(s,\widetilde{X}_s,\tilde{y}_s,\tilde{z}_s)\,dW_s,\\ \widetilde{Y}_t
&= g(\widetilde{X}_T) +
\int_t^Tf(s,\widetilde{X}_s,\tilde{y}_s,\tilde{z}_s)\,ds -
\int_t^T\widetilde{Z}_s\,dW_s,
\end{aligned}
\right.
\end{equation}
and its solution is denoted as $(\widetilde{X},\widetilde{Y},\widetilde{Z})$.
Applying Theorem~\ref{th:BML-interpretation-simple-decoupled-FBSDE} to this
equation, we obtain a result similar to
Proposition~\ref{prop:BML-interpretation-nonlinear-decoupled-FBSDE}.

\begin{proposition}
\label{prop:BML-interpretation-fully-coupled-nonlinear-FBSDE}
Let $\mu$ be a $\sigma$-finite measure on $[0,T]$.
Consider the coupled nonlinear FBSDE \eqref{eq:fully-coupled-nonlinear-FBSDE}.
Assume $b,\sigma,f\in
L^2_{\mathcal{F}}(0,T;W^{1,\infty}(M;N))$ and $g\in
L^2_{\mathcal{F}}(\Omega;W^{1,\infty}(M;N))$, where $M,N$ are Euclidean spaces
of appropriate dimensions.
Then, for any trial solution $(\tilde{y},\tilde{z})\in \mathcal{M}[0,T]$, its
BML value is equal to the distance between $(\tilde{y},\tilde{z})$ and the
solution $(\widetilde{Y},\widetilde{Z})$ of the decoupled simplified FBSDE
\eqref{eq:manually-decoupled-nonlinear-FBSDE}, i.e.,
$$\operatorname{BML}(\tilde{y},\tilde{z};\mu) =
\operatorname{dist}_\mu((\tilde{y},\tilde{z}), (\widetilde{Y},\widetilde{Z})).$$
Here, $\operatorname{dist}_\mu$ is defined by \eqref{eqdef:dist-yz-YZ}.
\end{proposition}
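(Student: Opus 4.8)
The plan is to reduce the coupled case to the decoupled linear situation of Theorem~\ref{th:BML-interpretation-simple-decoupled-FBSDE}, mirroring the proof of Proposition~\ref{prop:BML-interpretation-nonlinear-decoupled-FBSDE} but with one extra preparatory step. Fix $(\tilde{y},\tilde{z})\in\mathcal{M}[0,T]$. The key observation is that, once we have constructed the auxiliary forward process $\widetilde{X}$ solving \eqref{eq:BML-tilde-X}, the quantities $\tilde{\xi}\coloneqq g(\widetilde{X}_T)$ and $\tilde{f}_s\coloneqq f(s,\widetilde{X}_s,\tilde{y}_s,\tilde{z}_s)$ are \emph{fixed} data (not depending on the unknown solution of \eqref{eq:fully-coupled-nonlinear-FBSDE}), so the BML value in \eqref{eqdef:fully-coupled-nonlinear-BML} coincides with the BML value of $(\tilde{y},\tilde{z})$ for the simple decoupled backward equation driven by terminal value $\tilde{\xi}$ and generator $\tilde{f}$, to which Theorem~\ref{th:BML-interpretation-simple-decoupled-FBSDE} applies.

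First I would invoke standard SDE theory to guarantee that \eqref{eq:BML-tilde-X} has a unique strong solution $\widetilde{X}\in L^2_{\mathcal{F}}(\Omega;C[0,T];\mathbb{R}^n)$. The points to check are that $(s,x)\mapsto b(s,x,\tilde{y}_s,\tilde{z}_s)$ and $(s,x)\mapsto\sigma(s,x,\tilde{y}_s,\tilde{z}_s)$ are $\mathbb{F}$-adapted and uniformly Lipschitz in $x$ (with constants furnished by the $W^{1,\infty}$ assumptions on $b,\sigma$), and that the inhomogeneous parts are square integrable, i.e. $\operatorname{\mathbb{E}}\int_0^T\bigl(|b(s,0,\tilde{y}_s,\tilde{z}_s)|^2+|\sigma(s,0,\tilde{y}_s,\tilde{z}_s)|^2\bigr)\,ds<\infty$; the latter follows from $|b(s,0,\tilde{y}_s,\tilde{z}_s)|\le|b(s,0,0,0)|+L_b(|\tilde{y}_s|+|\tilde{z}_s|)$ together with $b(\cdot,0)\in L^2_{\mathcal{F}}(0,T)$ and $(\tilde{y},\tilde{z})\in\mathcal{M}[0,T]$, and similarly for $\sigma$. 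The classical moment estimate then yields $\operatorname{\mathbb{E}}\sup_{t\in[0,T]}|\widetilde{X}_t|^2<\infty$.

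Next I would verify the admissibility of $\tilde{\xi}$ and $\tilde{f}$. Since $g$ is $\mathcal{F}_T$-measurable and $L_g$-Lipschitz, $\tilde{\xi}$ is $\mathcal{F}_T$-measurable and $|\tilde{\xi}|\le|g(0)|+L_g|\widetilde{X}_T|$, which is square integrable because $g(0)\in L^2_{\mathcal{F}_T}$ and $\widetilde{X}_T\in L^2$; hence $\tilde{\xi}\in L^2_{\mathcal{F}_T}$. Likewise $\tilde{f}$ is $\mathbb{F}$-adapted and $\operatorname{\mathbb{E}}\int_0^T|\tilde{f}_s-f(s,0,0,0)|^2\,ds\le L_f^2\,\operatorname{\mathbb{E}}\int_0^T(|\widetilde{X}_s|+|\tilde{y}_s|+|\tilde{z}_s|)^2\,ds<\infty$, so with $f(\cdot,0)\in L^2_{\mathcal{F}}(0,T)$ we get $\tilde{f}\in L^2_{\mathcal{F}}(0,T;\mathbb{R}^m)$. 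Applying Theorem~\ref{th:BML-interpretation-simple-decoupled-FBSDE} to the linear BSDE $\widehat{Y}_t=\tilde{\xi}+\int_t^T\tilde{f}_s\,ds-\int_t^T\widehat{Z}_s\,dW_s$ gives
\[
\int_0^T\operatorname{\mathbb{E}}\left|\tilde{y}_t-\Bigl(\tilde{\xi}+\int_t^T\tilde{f}_s\,ds-\int_t^T\tilde{z}_s\,dW_s\Bigr)\right|^2\mu(dt)=\operatorname{dist}_\mu\bigl((\tilde{y},\tilde{z}),(\widehat{Y},\widehat{Z})\bigr).
\]
By the definitions of $\tilde{\xi}$, $\tilde{f}$, and the fact that $\widetilde{X}$ is exactly the process \eqref{eq:BML-tilde-X} used in \eqref{eqdef:fully-coupled-nonlinear-BML}, the left-hand side is precisely $\operatorname{BML}(\tilde{y},\tilde{z};\mu)$ for the coupled FBSDE; and since the linear BSDE has a unique solution, comparison with \eqref{eq:manually-decoupled-nonlinear-FBSDE} (whose forward component is the same SDE and whose backward component is this very linear BSDE) gives $(\widehat{Y},\widehat{Z})=(\widetilde{Y},\widetilde{Z})$, which is the claimed identity.

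The main obstacle — really the only point beyond bookkeeping — is the first step: showing that the frozen-coefficient forward SDE \eqref{eq:BML-tilde-X} is well posed with a path-continuous, $L^2$-bounded solution, because unlike Proposition~\ref{prop:BML-interpretation-nonlinear-decoupled-FBSDE} the coefficients here are genuinely random and interact with $(\tilde{y},\tilde{z})$ only through $L^2_{\mathcal{F}}(0,T)$-integrability rather than through any boundedness. Once $\widetilde{X}\in L^2_{\mathcal{F}}(\Omega;C[0,T];\mathbb{R}^n)$ is secured, the remaining estimates are Lipschitz-plus-triangle-inequality arguments of the same type already used in the decoupled case, and the conclusion follows directly from Theorem~\ref{th:BML-interpretation-simple-decoupled-FBSDE}.
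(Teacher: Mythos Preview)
Your proposal is correct and follows essentially the same route as the paper: freeze $(\tilde{y},\tilde{z})$, define $\tilde{\xi}=g(\widetilde{X}_T)$ and $\tilde{f}_s=f(s,\widetilde{X}_s,\tilde{y}_s,\tilde{z}_s)$, apply Theorem~\ref{th:BML-interpretation-simple-decoupled-FBSDE} to the resulting linear BSDE, and identify its unique solution with $(\widetilde{Y},\widetilde{Z})$. You are in fact more careful than the paper, which simply asserts that ``it can be easily shown that this BSDE has a unique solution'' without spelling out the forward SDE well-posedness or the $L^2$ checks on $\tilde{\xi}$ and $\tilde{f}$ that you provide.
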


Similar to Proposition \ref{prop:BML-interpretation-nonlinear-decoupled-FBSDE}
in the previous subsection, the key to proving this proposition is to realize
that for the same trial solution $(\tilde{y},\tilde{z})$, its BML value with
respect to the decoupled simplified FBSDE
\eqref{eq:manually-decoupled-nonlinear-FBSDE} is equal to that value with
respect to the coupled nonlinear FBSDE \eqref{eq:fully-coupled-nonlinear-FBSDE}.

\begin{proof}
Let $(\widetilde{X},\widetilde{Y},\widetilde{Z})$ be the solution to the
decoupled simplified FBSDE \eqref{eq:manually-decoupled-nonlinear-FBSDE}. For
any trial solution $(\tilde{y},\tilde{z})\in \mathcal{M}[0,T]$, let
$\tilde{\xi}=g(\widetilde{X}_T)$ and
$\tilde{f}_s=f(s,\widetilde{X}_s,\tilde{y}_s,\tilde{z}_s)$. Then,
$(\widetilde{Y},\widetilde{Z})$ satisfies the BSDE
\begin{equation}
\widetilde{Y}_t = \tilde{\xi} + \int_t^T\tilde{f}_s\,ds - \int_t^T\widetilde{Z}_s\,dW_s.
\end{equation}
By the assumptions on terminal condition $g$ and generator $f$, it can
be easily shown that this BSDE has a unique solution. Moreover, noting that this
equation satisfies the conditions of Theorem
\ref{th:BML-interpretation-simple-decoupled-FBSDE}, we have
\begin{equation}
\label{eq:proof-tmp-622}
\operatorname{dist}_\mu((\tilde{y},\tilde{z}), (\widetilde{Y},\widetilde{Z})) =
\int_0^T \operatorname{\mathbb{E}}\left|\tilde{y}_t - \biggl(\tilde{\xi} +
\int_s^T\tilde{f}_s\,ds - \int_t^T\tilde{z}_s\,dW_s\biggr)\right|^2\,\mu(dt).
\end{equation}
On the other hand, noting that the solution to the forward equation of FBSDE
\eqref{eq:manually-decoupled-nonlinear-FBSDE} satisfies stochastic
differential equation \eqref{eq:BML-tilde-X}, we have, according to the
definition of BML value,
\begin{equation}
\label{eq:proof-tmp-623}
\operatorname{BML}((\tilde{y},\tilde{z};\mu)) = \int_0^T
\operatorname{\mathbb{E}}\left|\tilde{y}_t - \biggl(g(\widetilde{X}_T) +
\int_s^T{f}(s,\widetilde{X}_s,\tilde{y}_s,\tilde{z}_s)\,ds -
\int_t^T\tilde{z}_s\,dW_s\biggr)\right|^2\,\mu(dt).
\end{equation}
Since the right-hand side of equation \eqref{eq:proof-tmp-622} is equal to the
right-hand side of equation \eqref{eq:proof-tmp-623}, we conclude that
$$\operatorname{BML}(\tilde{y},\tilde{z};\mu) =
\operatorname{dist}_\mu((\tilde{y},\tilde{z}), (\widetilde{Y},\widetilde{Z})).$$
Thus, the proposition is proved.
\end{proof}

Similar to the discussion provided in the previous subsection, the BML value for
coupled nonlinear FBSDE cases can also be interpreted from a fixed-point iteration
perspective. When the assumptions of Proposition
\ref{prop:BML-interpretation-fully-coupled-nonlinear-FBSDE} hold, for any pair
of stochastic processes $(\tilde{y},\tilde{z})\in\mathcal{M}[0,T]$, the forward
equation in FBSDE \eqref{eq:manually-decoupled-nonlinear-FBSDE} has a unique
strong solution $\widetilde{X}\in
L^2_{\mathcal{F}}(\Omega;C[0,T];\mathbb{R}^n)$. Thus, the backward equation in
FBSDE \eqref{eq:manually-decoupled-nonlinear-FBSDE} has a unique solution
$(\widetilde{Y},\widetilde{Z})$. This defines a map $\Phi$ on the space
$\mathcal{M}[0,T]$, which maps $(\tilde{y},\tilde{z})$ to the unique solution
$(\widetilde{Y},\widetilde{Z})$ of FBSDE
\eqref{eq:manually-decoupled-nonlinear-FBSDE}. Similarly, the fixed-points of
this map $\Phi$ are equivalent to the solutions of the coupled nonlinear FBSDE
\eqref{eq:fully-coupled-nonlinear-FBSDE}. On the other hand,
$\operatorname{dist}_\mu((\tilde{y},\tilde{z}), (\widetilde{Y},\widetilde{Z}))$
in the conclusion of Proposition
\ref{prop:BML-interpretation-fully-coupled-nonlinear-FBSDE} can also be replaced
by $\operatorname{dist}_\mu((\tilde{y},\tilde{z}),\Phi(\tilde{y},\tilde{z}))$.
Therefore, optimizing the BML is equivalent to optimizing the distance
between the trial solution and $\Phi(\tilde{y},\tilde{z})$. It can be expected
that when this distance is optimized to zero, the trial solution coincides with
the true solution.

\begin{theorem}
  \label{th:BML-interpretation-fully-coupled-nonlinear-FBSDE}
Let $\mu$ be a $\sigma$-finite measure on $[0,T]$ and consider the coupled nonlinear FBSDE \eqref{eq:fully-coupled-nonlinear-FBSDE}
and let $(X,Y,Z)$ be its solution. Suppose that the assumptions of Theorem
\ref{th:uniqueness-and-existence-fully-coupled-nonlinear-FBSDE} hold for
$b,\sigma,f,g$.
If
$(\tilde{y}^*,\tilde{z}^*)\in\mathcal{M}[0,T]$ is a minimizer of the
optimization problem
\begin{equation}
  \label{op:BML-mu-alias}
  \min_{\tilde{y},\tilde{z}}\quad
  \operatorname{BML}(\tilde{y},\tilde{z};\mu),
\end{equation}
then it achieves zero, i.e.,
$\operatorname{BML}(\tilde{y}^*,\tilde{z}^*;\mu) =0$.
In particular,
\begin{equation}
\label{eq:weak-solution}
\left\{
\begin{aligned}
{x}^*_t &= x_0 +
\int_0^tb(s,{x}^*_s,{y}^*_s,{z}^*_s)\,ds +
\int_0^t\sigma(s,{x}^*_s,{y}^*_s,{z}^*_s)\,dW_s,\\
{y}^*_t
&= g({x}^*_T) +
\int_t^Tf(s,{x}^*_s,{y}^*_s,{z}^*_s)\,ds -
\int_t^T{z}^*_s\,dW_s
\end{aligned}
\right.
\end{equation}
holds $d\mu\otimes d\,\mathbb{P}$-a.e..
Moreover, if $\mu$ is equivalent to the Lebesgue
measure, then $\operatorname{dist}_\mu((y^*,z^*),(Y,Z))=0$.
Even stronger, if $\mu$ is equivalent to  the Lebesgue
measure,
then for all $t\in[0,T]$,
Eq.~\eqref{eq:weak-solution} holds almost surely,
which means $(y^*,z^*)$ solves the coupled nonlinear FBSDE \eqref{eq:fully-coupled-nonlinear-FBSDE}.
\end{theorem}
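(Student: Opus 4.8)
The plan is to assemble the statement from two ingredients already in hand: Proposition~\ref{prop:BML-interpretation-fully-coupled-nonlinear-FBSDE}, which identifies $\operatorname{BML}(\tilde{y},\tilde{z};\mu)$ with $\operatorname{dist}_\mu\bigl((\tilde{y},\tilde{z}),\Phi(\tilde{y},\tilde{z})\bigr)$, and Theorem~\ref{th:uniqueness-and-existence-fully-coupled-nonlinear-FBSDE}, which gives existence and uniqueness of the solution $(X,Y,Z)$ to \eqref{eq:fully-coupled-nonlinear-FBSDE}; the argument mirrors the proof of Theorem~\ref{th:BML-interpretation-nonlinear-decoupled-FBSDE}. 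First I would show the infimum in \eqref{op:BML-mu-alias} is zero and is attained at the true solution. Plugging $(\tilde{y},\tilde{z})=(Y,Z)$ into \eqref{eqdef:fully-coupled-nonlinear-BML}, the auxiliary SDE \eqref{eq:BML-tilde-X} becomes exactly the forward equation of \eqref{eq:fully-coupled-nonlinear-FBSDE}, so $\widetilde{X}=X$ by uniqueness of its strong solution; then the backward equation forces the bracketed expression $g(\widetilde{X}_T)+\int_t^T f(s,\widetilde{X}_s,Y_s,Z_s)\,ds-\int_t^T Z_s\,dW_s$ to equal $Y_t$, so the integrand vanishes and $\operatorname{BML}(Y,Z;\mu)=0$. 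Since $(Y,Z)\in\mathcal{M}[0,T]$ is feasible and $\operatorname{BML}$ is a non-negative integral, any minimizer satisfies $0\le\operatorname{BML}(\tilde{y}^*,\tilde{z}^*;\mu)\le\operatorname{BML}(Y,Z;\mu)=0$, which is the first assertion.

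Next, with $\operatorname{BML}(\tilde{y}^*,\tilde{z}^*;\mu)=0$, I would simply unwrap the definition. Writing $x^*=\widetilde{X}$ for the solution of \eqref{eq:BML-tilde-X} driven by $(\tilde{y}^*,\tilde{z}^*)$, vanishing of $\int_0^T\operatorname{\mathbb{E}}|\cdot|^2\,\mu(dt)$ forces, for $\mu$-a.e.\ $t$ and almost surely, $y^*_t=g(x^*_T)+\int_t^T f(s,x^*_s,y^*_s,z^*_s)\,ds-\int_t^T z^*_s\,dW_s$. Combined with the forward equation, which holds for every $t$ a.s.\ by construction of $x^*$, this is precisely the $d\mu\otimes d\,\mathbb{P}$-a.e.\ statement \eqref{eq:weak-solution}.

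For the last two claims, suppose $\mu$ is equivalent to Lebesgue measure and invoke Proposition~\ref{prop:BML-interpretation-fully-coupled-nonlinear-FBSDE}: $0=\operatorname{BML}(\tilde{y}^*,\tilde{z}^*;\mu)=\operatorname{dist}_\mu\bigl((\tilde{y}^*,\tilde{z}^*),\Phi(\tilde{y}^*,\tilde{z}^*)\bigr)$, with $\Phi(\tilde{y}^*,\tilde{z}^*)=(\widetilde{Y},\widetilde{Z})$ the backward part of the decoupled simplified FBSDE \eqref{eq:manually-decoupled-nonlinear-FBSDE}. Expanding $\operatorname{dist}_\mu$ from \eqref{eqdef:dist-yz-YZ}, the $y$-term gives $\operatorname{\mathbb{E}}|y^*_t-\widetilde{Y}_t|^2=0$ for $\mu$-a.e.\ $t$, hence for Lebesgue-a.e.\ $t$; since $y^*,\widetilde{Y}\in L^2_{\mathcal{F}}(\Omega;C[0,T];\mathbb{R}^m)$, a Fubini argument together with continuity in $t$ upgrades this to $y^*_t=\widetilde{Y}_t$ for all $t$, a.s. The $z$-term, after Tonelli, reads $\int_0^T\mu([0,s])\,\operatorname{\mathbb{E}}|z^*_s-\widetilde{Z}_s|^2\,ds=0$, and $\mu([0,s])>0$ for $s>0$, so $z^*=\widetilde{Z}$ in $L^2_{\mathcal{F}}(0,T;\mathbb{R}^{m\times d})$. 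Thus $(\tilde{y}^*,\tilde{z}^*)$ is a fixed point of $\Phi$. I would then record (as already noted before the theorem) that fixed points of $\Phi$ are exactly the backward components of solutions of \eqref{eq:fully-coupled-nonlinear-FBSDE}: substituting a fixed point back into \eqref{eq:manually-decoupled-nonlinear-FBSDE} recovers \eqref{eq:fully-coupled-nonlinear-FBSDE}, and conversely uniqueness of the forward SDE and of the associated linear BSDE shows $(Y,Z)$ is a fixed point. Under the hypotheses of Theorem~\ref{th:uniqueness-and-existence-fully-coupled-nonlinear-FBSDE} this solution is unique, so $(\tilde{y}^*,\tilde{z}^*)=(Y,Z)$ in $\mathcal{M}[0,T]$, giving $\operatorname{dist}_\mu\bigl((y^*,z^*),(Y,Z)\bigr)=0$; and $x^*$ then solves the same forward SDE as $X$, so $x^*=X$, whence $(x^*,y^*,z^*)=(X,Y,Z)$ satisfies \eqref{eq:weak-solution} for every $t\in[0,T]$ almost surely.

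I expect the main obstacle to be the measure-theoretic upgrade in the last step, i.e., passing from the $d\mu\otimes d\,\mathbb{P}$-a.e.\ (equivalently Lebesgue-a.e.) identities $y^*=\widetilde{Y}$, $z^*=\widetilde{Z}$ to genuine pathwise identities: for the $y$-component this is where ``$\mu$ equivalent to Lebesgue'' together with trajectory continuity is essential, and for the $z$-component one must exploit $\mu([0,s])>0$ for all $s>0$. One should also check, before the argument, that $\Phi$ is well defined on all of $\mathcal{M}[0,T]$ under the Lipschitz/integrability hypotheses on $b,\sigma,f,g$ (unique strong solution of \eqref{eq:BML-tilde-X}, hence unique solution of the resulting linear BSDE), which is exactly what was verified in the proof of Proposition~\ref{prop:BML-interpretation-fully-coupled-nonlinear-FBSDE}.
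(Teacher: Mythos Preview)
Your proposal is correct and, for the first two claims ($\operatorname{BML}(\tilde{y}^*,\tilde{z}^*;\mu)=0$ and the $d\mu\otimes d\,\mathbb{P}$-a.e.\ version of \eqref{eq:weak-solution}), it matches the paper's proof essentially verbatim: plug in the true solution to see the infimum is zero, then read off the a.e.\ identity from the definition of $\operatorname{BML}$.

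For the final upgrade under $\mu\sim\text{Lebesgue}$, you take a slightly different route than the paper. The paper argues directly on the residual
\[
y^*_t - g(x^*_T) - \int_t^T f(s,x^*_s,y^*_s,z^*_s)\,ds + \int_t^T z^*_s\,dW_s,
\]
observes it is a.s.\ continuous in $t$, and concludes that vanishing $dt\otimes d\,\mathbb{P}$-a.e.\ forces vanishing for every $t$ a.s.; uniqueness from Theorem~\ref{th:uniqueness-and-existence-fully-coupled-nonlinear-FBSDE} then finishes. You instead invoke Proposition~\ref{prop:BML-interpretation-fully-coupled-nonlinear-FBSDE} to rewrite $0=\operatorname{dist}_\mu\bigl((y^*,z^*),\Phi(y^*,z^*)\bigr)$, and then separately upgrade the $y$-identity via continuity and the $z$-identity via the Tonelli weight $\mu([0,s])>0$, concluding that $(y^*,z^*)$ is a fixed point of $\Phi$. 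Both arguments are sound; the paper's is shorter because a single continuity observation handles the whole backward equation at once, without needing to isolate the $z$-component or pass through $\Phi$. Your version has the modest advantage of making the fixed-point interpretation explicit and isolating exactly where the hypothesis $\mu\sim\text{Lebesgue}$ enters for each component.
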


\begin{proof}
This proof is easier than the decoupled case as we
do not try to prove
Lemma~\ref{lem:BML-interpretation-nonlinear-decoupled-FBSDE}.
In fact, the solution $(X,Y,Z)$ indeed exists by
Theorem~\ref{th:uniqueness-and-existence-fully-coupled-nonlinear-FBSDE}.
Hence, $\operatorname{BML}(Y,Z;\mu)=0$ is true by definition,
which means the minimum of the optimization problem is 0.
Therefore, $\operatorname{BML}(y^*,z^*;\mu)=0$,
and Eq.~\eqref{eq:weak-solution} holds $d\mu\otimes d\,\mathbb{P}$-a.e.
by the definition of BML;
see Eq.~\eqref{eqdef:fully-coupled-nonlinear-BML} and
Eq.~\eqref{eq:BML-tilde-X}.

Now suppose $\mu$ is equivalent to the Lebesgue measure.
Then, Eq.~\eqref{eq:weak-solution} holds $dt\otimes d\,\mathbb{P}$-a.e..
In particular,
$$
y_t - g({x}^*_T) -
\int_t^Tf(s,{x}^*_s,{y}^*_s,{z}^*_s)\,ds +
\int_t^T{z}^*_s\,dW_s = 0
$$
holds $dt\otimes d\,\mathbb{P}$-a.e..
Noting that the left-hand side is continuous in time,
it must hold almost surely for all $t\in[0,T]$.
Hence, Eq.~\eqref{eq:weak-solution} tells us that 
$(y^*,z^*)$ is a solution to the coupled nonlinear FBSDE.
Again, by Theorem~\ref{th:uniqueness-and-existence-fully-coupled-nonlinear-FBSDE},
the solution is unique,
so $\operatorname{dist}_\mu((y^*,z^*),(Y,Z))=0$.
This completes the proof.
\end{proof}

\begin{remark}
We should emphasize that when degenerated to the decoupled case, Theorem~\ref{th:BML-interpretation-nonlinear-decoupled-FBSDE}
is more general on the choice of $\mu$. More specifically, as long as the BML value of $(y^*,z^*)$ is zero, Theorem~\ref{th:BML-interpretation-nonlinear-decoupled-FBSDE} immediately concludes that $\operatorname{dist}_\mu((y^*,z^*),(Y,Z))=0$, without assuming $\mu$ is equivalent to the Lebesgue measure. In other words, the conclusion obtained in the decoupled case does not hold in the fully coupled case. This difference is, in fact, a direct consequence of the failure of Lemma~\ref{lem:BML-interpretation-nonlinear-decoupled-FBSDE} in the coupled case. Nevertheless, $\mu$ is not a part of the FBSDE and is only introduced manually to define the BML value. We are free to choose it to fulfill the assumption.
\end{remark}

\section{Discretization and Optimization Algorithm Design}

Based on the analysis in the previous sections, solving the coupled FBSDE can be transformed into optimization problem \eqref{op:ell-yz}, where the error function $\ell$ can be chosen as the BML value of the trial solution. By employing optimization methods, we can find a stochastic process
$(\tilde{y}^*,\tilde{z}^*)$ that minimizes the BML value and is considered as an approximate solution to the backward equation in the coupled FBSDE. Finally, by substituting the obtained approximate solution $(\tilde{y}^*,\tilde{z}^*)$ into the forward equation and applying the standard Euler-Maruyama scheme for time discretization, we can calculate the approximate solution to the forward process. This section will introduce the calculation method for the BML value and provide specific numerical algorithms.

\subsection{Parameterization and Sampling of the Trial Solution}

Firstly, it should be noted that the optimization variables in problem
\eqref{op:ell-yz} consist of a pair of stochastic processes
$\{(\tilde{y}_s,\tilde{z}_s); 0\leq s\leq T\}$. In order to apply numerical
optimization algorithms, we first parameterize the stochastic processes, and this 
allows us to transform the search space of the optimization problem into a
finite-dimensional space. Considering the Markov property of FBSDEs, the
solution to the backward equation can be generally represented as
\begin{equation} \label{eq:true-uv} Y_t=v(t,X_t), \quad Z_t=u(t,X_t), \quad
t\in[0,T], 
\end{equation}
where the nonlinear Feynman-Kac lemma in stochastic
analysis provides such a form of solution for a class of FBSDEs, with the
bivariate function $v(\cdot,\cdot)$ satisfying a second-order parabolic partial
differential equation and function $u$ satisfying
$u(\cdot,\cdot)=\sigma^\intercal\partial_x v(\cdot,\cdot)$. Similarly, we select the 
parameterized functions $v^\theta$ and $u^\theta$, which in this paper are neural
networks, to replace the roles of $v$ and $u$.

The method for constructing the trial solution $\tilde{y}_s, \tilde{z}_s$
through parameterized functions is as follows. Let $\Theta$ be the set of
parameters. For $\theta\in\Theta$, denote the corresponding parameterized
functions as $v^\theta$ and $u^\theta$. Calculate the approximate solution to
the forward equation
\begin{equation} \label{eq:tilde-X-theta}
\widetilde{X}^\theta_t = x_0 + \int_0^t b^\theta(s,\widetilde{X}^\theta_s)ds +
\int_0^t \sigma^\theta(s,\widetilde{X}^\theta_s)dW_s, \quad t\in[0,T],
\end{equation}
where
$$
b^\theta(t,x)\coloneqq
b(t,x,v^\theta(t,x),u^\theta(t,x)), \quad \sigma^\theta(t,x)\coloneqq
\sigma(t,x,v^\theta(t,x),u^\theta(t,x)).
$$
Then, define
\begin{equation}
\label{eq:tilde-yz-theta} \tilde{y}^\theta_t=v^\theta(t,\widetilde{X}^\theta_t),
\quad \tilde{z}^\theta_t=u^\theta(t,\widetilde{X}^\theta_t), \quad t\in[0,T].
\end{equation}

The sampling method for representing the trial solution
$(\tilde{y}^\theta,\tilde{z}^\theta)$ is also straightforward. Firstly, sample a
path of standard Brownian motion using a random number generator, for example,
$\{W_t(\omega_r); 0\leq t\leq T\}$. Then, substitute this sample path into
equation \eqref{eq:tilde-X-theta} to obtain the corresponding sample path of the
approximate forward process, denoted as $\{\widetilde{X}^\theta_t(\omega_r);
0\leq t\leq T\}$. Finally, by utilizing the representation
\eqref{eq:tilde-yz-theta}, we obtain the sample path
$\{(\tilde{y}^\theta_t(\omega_r),\tilde{z}^\theta_t(\omega_r)); 0\leq t\leq T\}$
of the trial solution.

\subsection{Calculating BML Value}\label{ssec:calc-BML}

\begin{algorithm}
  \caption{Numerical Algorithm for Solving FBSDE via Optimization of BML.\label{alg:solve-FBSDE-via-BML}}
\small
\hspace*{\algorithmicindent} \textbf{Input:} FBSDE parameters $x_0,b,\sigma,f,g,T$. \\
\hspace*{\algorithmicindent} \textbf{Output:} Set of sample paths of the solution \\
\hspace*{\algorithmicindent} \hspace{1.6em} $\{(X_{t_i}(\omega_r), Y_{t_i}(\omega_r), Z_{t_i}(\omega_r))\,|\,0\leq i\leq N,0\leq r < M\}$. \\
\hspace*{\algorithmicindent} \textbf{Parameters:} Number of time intervals $N$, number of samples $M$, gradient update step size $\eta$, tolerance error $\epsilon$, measure $\mu$ \\
\vspace*{-5pt}
\begin{algorithmic}[1]
\STATE Construct neural networks $\tilde{y}^\theta$ and $\tilde{z}^\theta$, initialize the weights $\theta$;
\STATE \texttt{\#} { \it Sample $M$ paths of standard Brownian motion with time interval $\Delta t=T/N$;} \label{state:sample-W}
\FOR{ $r=0$ \TO $M-1$}
\STATE $W_{t_0}(\omega_r)\leftarrow 0$
\FOR{ $i=0$ \TO $N-1$}
\STATE Sample $\epsilon_{i,r}$ from a normal distribution with mean zero and variance $\frac{T}{N}$;
\STATE $W_{t_{i+1}}(\omega_r) \leftarrow W_{t_{i}}(\omega_r) + \epsilon_{i,r}$;
\ENDFOR
\ENDFOR
\STATE \texttt{\#} { \it Compute $M$ corresponding sample paths of the forward process;}
\FOR{ $r=0$ \TO $M-1$}
\STATE $X_{t_0}(\omega_r)\leftarrow x_0$;
\STATE $Y_{t_0}(\omega_r)\leftarrow \tilde{y}^\theta(t_0,x_0)$;
\STATE $Z_{t_0}(\omega_r)\leftarrow \tilde{z}^\theta(t_0,x_0)$;
\FOR{ $i=0$ \TO $N-1$}
\STATE Compute $X_{t_{i+1}}(\omega_r)$ according to \eqref{eq:discrete-X-theta};
\STATE $Y_{t_i}(\omega_r)\leftarrow \tilde{y}^\theta(t_i,X_{t_{i+1}}(\omega_r))$;
\STATE $Z_{t_i}(\omega_r)\leftarrow \tilde{z}^\theta(t_i,X_{t_{i+1}}(\omega_r))$;
\ENDFOR
\ENDFOR
\STATE \texttt{\#} { \it Estimate the BML value;}
\IF{$\mu(dt)=\delta(dt)$}
\STATE $\ell(\theta)\leftarrow\delta$-BML, as given in \eqref{eq:calc-delta-BML};
\ENDIF
\IF{$\mu(dt)\propto dt$}
\STATE $\ell(\theta)\leftarrow\lambda$-BML, as given in \eqref{eq:calc-lambda-BML};
\ENDIF
\IF{$\mu(dt)\propto e^{-\gamma t}\,dt$}
\STATE $\ell(\theta)\leftarrow\gamma$-BML, as given in \eqref{eq:calc-gamma-BML};
\ENDIF
\STATE \texttt{\#} { \it Check if the accuracy is sufficient;}
\IF{$|\ell(\theta)| < \epsilon$}
\STATE \texttt{\#} { \it  Output the results;}
\RETURN $\{(X_{t_i}(\omega_r), Y_{t_i}(\omega_r), Z_{t_i}(\omega_r))\,|\,0\leq i\leq N,0\leq r < M\}$;
\ELSE
\STATE \texttt{\#} { \it  Perform one gradient update on $\theta$;}
\STATE $\theta\leftarrow \theta - \eta\nabla\ell(\theta)$;
\STATE Go back to Line \ref{state:sample-W};
\ENDIF
\end{algorithmic}
\end{algorithm}

Using the method described in the previous section to parameterize the
experimental solution, the optimization problem to be solved can now be written
as
$$
\min_{\theta\in\Theta}\quad\operatorname{BML}(\theta;\mu)\coloneqq\operatorname{BML}(\tilde{y}^\theta,\tilde{z}^\theta;\mu).
$$
Next, we will explain the specific calculation of the BML value given a
certain $\theta$.

According to the definition in
Equation~\eqref{eqdef:fully-coupled-nonlinear-BML}, the calculation of the BML
value mainly involves computing multiple integrals and expectations of the
stochastic processes $(\widetilde{X}^\theta,\tilde{y}^\theta,\tilde{z}^\theta)$.
Monte Carlo methods are used to estimate the expectations, and a simple
rectangle formula is used to estimate the time integrals. The basic procedure is
to first calculate all the time integrals for a single sample path, and then
take the average of the results from all samples to estimate the expectation.
For convenience, we choose $N+1$ equidistant time nodes on the interval $[0,T]$
(including both endpoints), denoted as
$$ t_i = \frac{iT}{N}, \qquad
i=0,1,2,\ldots,N.
$$
Therefore, for a single sample path of Brownian motion
$\{W_t(\omega_r),~0\leq t\leq T\}$, the state values at $N+1$ nodes can be
calculated:
\begin{equation} \label{eq:discrete-X-theta}
\widetilde{X}_{t_{i+1}}(\omega_r) = \widetilde{X}_{t_i}(\omega_r) +
b^\theta(t_i,\widetilde{X}_{t_i}(\omega_r))\frac{T}{N} +
\sigma^\theta(t_i,\widetilde{X}_{t_i}(\omega_r))(W_{t_i+1}(\omega_r)-W_{t_i}(\omega_r)),\quad
\widetilde{X}_{t_0}\equiv x_0.
\end{equation}
By applying the representation
given in Equation~\eqref{eq:tilde-yz-theta}, we obtain the values of the
backward equation's experimental solution at $N+1$ nodes.

In this paper, we mainly consider three different cases for measure $\mu$:
the Dirac measure, the normalized Lebesgue measure, and the measure with
exponential decay. The investigation of other types of measures for $\mu$ can be
left for future work. Let $M$ denote the number of samples used in the Monte
Carlo method. The formulas for calculating the BML value in these three cases
are as follows.
\begin{itemize} 
\item When $\mu$ is the Dirac measure centered at the initial
time, the BML value (denoted as $\delta$-BML) is given by 
\begin{equation}\label{eq:calc-delta-BML}
\begin{aligned} \frac{1}{M} \sum_{r=0}^{M-1}
\biggl|\tilde{y}^\theta_{t_0}(\omega_r) -
\biggl(g(\widetilde{X}^\theta_{t_N})(\omega_r) &+ \frac{T}{N}
\sum_{i=0}^{N-1}f(t_i,\widetilde{X}^\theta_{t_i},\tilde{y}^\theta_{t_i},\tilde{z}^\theta_{t_i})(\omega_r)\\
& -
\sum_{i=0}^{N-1}\tilde{z}^\theta_{t_i}(W_{t_{i+1}}-W_{t_i})(\omega_r)\biggr)\biggr|^2.
\end{aligned}
\end{equation} 
\item When $\mu$ is the normalized Lebesgue measure
on the time interval $[0,T]$, the BML value (denoted as $\lambda$-BML) is given
by 
\begin{equation} \label{eq:calc-lambda-BML}
\begin{aligned} \frac{1}{M}
\sum_{r=0}^{M-1} \frac{1}{N}\sum_{j=0}^{N-1}
\biggl|\tilde{y}^\theta_{t_j}(\omega_r) -
\biggl(g(\widetilde{X}^\theta_{t_N})(\omega_r) &+ \frac{T}{N}
\sum_{i=j}^{N-1}f(t_i,\widetilde{X}^\theta_{t_i},\tilde{y}^\theta_{t_i},\tilde{z}^\theta_{t_i})(\omega_r)\\
& -
\sum_{i=0}^{N-1}\tilde{z}^\theta_{t_i}(W_{t_{i+1}}-W_{t_i})(\omega_r)\biggr)\biggr|^2.
\end{aligned}
\end{equation}
\item When $\mu$ is a measure with exponential
decay over time, the BML value (denoted as $\gamma$-BML) is given by
\begin{equation} \label{eq:calc-gamma-BML}
\begin{aligned}
\frac{1}{M}
\sum_{r=0}^{M-1} \frac{1-e^{-\gamma}}{1-e^{-\gamma N}}
\sum_{j=0}^{N-1}e^{-\gamma j} \biggl|\tilde{y}^\theta_{t_j}(\omega_r) -
\biggl(g(\widetilde{X}^\theta_{t_N})(\omega_r) &+ \frac{T}{N}
\sum_{i=j}^{N-1}f(t_i,\widetilde{X}^\theta_{t_i},\tilde{y}^\theta_{t_i},\tilde{z}^\theta_{t_i})(\omega_r)\\
& -
\sum_{i=0}^{N-1}\tilde{z}^\theta_{t_i}(W_{t_{i+1}}-W_{t_i})(\omega_r)\biggr)\biggr|^2.
\end{aligned}
\end{equation}
\end{itemize}

Different BML values correspond to different optimization problems. According to
the theoretical analysis in the previous section, they should all converge to
the parameter values corresponding to the true solution. However, in practical
calculations, errors due to time discretization, parameterization, and
floating-point computations may lead to different results when optimizing
different BML values. The choice of BML value can be made based on specific
needs. This is also the significance of investigating different values for
measure $\mu$. It is worth mentioning that the $\gamma$-BML value with
exponential decay is the most flexible. When $\gamma$ is set to a very small
positive value, the weight assigned to each time interval, $e^{-\gamma j}$, is
approximately 1. Therefore, $\gamma$-BML$\approx\lambda$-BML. On the other hand,
when $\gamma$ is set to a very large positive value, the weight decreases
rapidly over time and may only concentrate on the first time interval, resulting
in $\gamma$-BML$\approx\delta$-BML. In short, by adjusting the value of
$\gamma$, the $\gamma$-BML value can simulate either the $\delta$-BML value or
the $\lambda$-BML value. In this paper, the decay factor $\gamma$ is set to 0.05
to distinguish it from the other two cases.

\subsection{FBSDE Numerical Solution Algorithm based on Optimizing BML Value}

So far, the solution of the FBSDE has been transformed into the optimization
problem of $\operatorname{BML}(\theta;\mu)$, and the previous section has
detailed how to compute the BML value for a given parameter $\theta$. Putting
all these together, we can summarize the algorithm for solving FBSDE~\eqref{%
eq:fully-coupled-nonlinear-FBSDE}
via optimizing the BML value as Algorithm~\ref{alg:solve-FBSDE-via-BML}.

\section{Numerical Experiments}

This section presents specific examples of solving FBSDEs numerically using neural network optimization based on the BML value. The examples tested here include two coupled nonlinear FBSDEs with analytic solutions and a first-order adjoint equation for a high-dimensional stochastic optimal control problem.

Here are the specific settings for neural network optimization. All the experiments are conducted in the PyTorch framework. The functions $\tilde{y}^\theta(t,x)$ and $\tilde{z}^\theta(t,x)$ used to represent the trial solutions are fully connected neural networks, both of which take an $n+1$ dimensional vector consisting of $[t,x]$ as input. $\tilde{y}^\theta$ outputs an $m$-dimensional vector, while $\tilde{z}^\theta$ outputs an $md$-dimensional vector (equivalent to an $m\times d$ matrix). The optimization algorithm used is the Adam optimizer provided by PyTorch. For the detailed settings of each example in the neural network training, please refer to Table~\ref{tab:parameter-settings}.

Considering that the solution of the FBSDE only depends on the value at initial time $t=0$ and is not affected by the random sample point $\omega$, we calculate the relative error of the BML method estimated $Y_0$ using the reference $Y_0$ obtained from the analytic solution or another numerical method. This relative error is used as an evaluation criterion, as shown in Table~\ref{tab:solveFBSDE-results}. Here are the specific descriptions of the examples considered.

\begin{table}
\centering
\caption{Settings of training parameters for different examples.}
\label{tab:parameter-settings}
\begin{tabular}{llcccc}
\toprule
& Objective & Sampling Size M & Number of Layers & Number of Neurons & Learning Rate \\
\hline
Example~\ref{example:FuSinCos-EXP1} & $\delta$-BML & 4096 & 2 & 8 & 0.001 \\ 
& $\lambda$-BML & 4096 & 2 & 8 & 0.001 \\ 
& $\gamma$-BML & 4096 & 2 & 8 & 0.001 \\ \hline
Example~\ref{example:LongSin-EXP1} & $\delta$-BML & 1024 & 3 & 32 & 0.001 \\ 
& $\lambda$-BML & 1024 & 3 & 32 & 0.001 \\ 
& $\gamma$-BML & 1024 & 3 & 32 & 0.001 \\ \hline
Example~\ref{example:JiLQ-EXP1} & $\delta$-BML & 64 & 2 & 16 & 0.001 \\ 
& $\lambda$-BML & 64 & 2 & 16 & 0.001 \\ 
& $\gamma$-BML & 64 & 2 & 16 & 0.001 \\ \hline
Example~\ref{example:JiLQ-EXP2} & $\delta$-BML & 64 & 2 & 16 & 0.0005 \\ 
& $\lambda$-BML & 64 & 2 & 16 & 0.002 \\ 
& $\gamma$-BML & 64 & 2 & 16 & 0.002 \\
\bottomrule
\end{tabular}
\end{table}

\begin{figure}
    \centering
    \subfigure[Training process of Example~\ref{example:FuSinCos-EXP1}]{\label{fig:FuSinCos-EXP1}\includegraphics[width=0.9\textwidth]{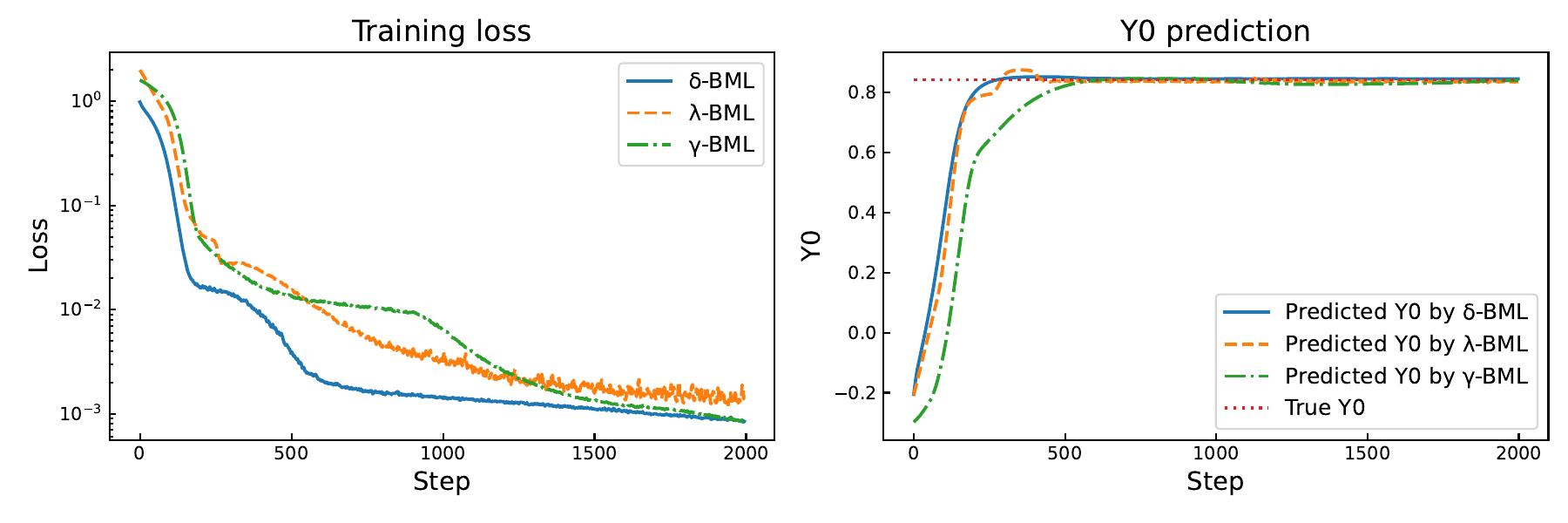}}
    \vskip.05\baselineskip
    \subfigure[Training process of Example~\ref{example:LongSin-EXP1}]{\label{fig:LongSin-EXP1}\includegraphics[width=0.9\textwidth]{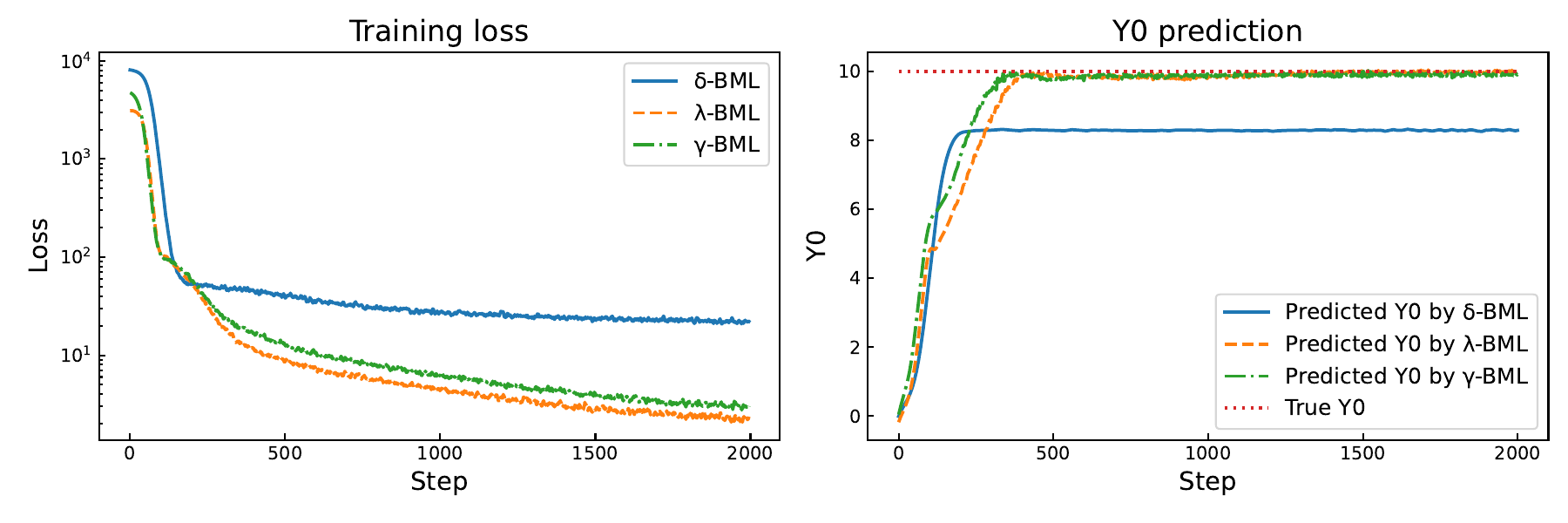}}
    \vskip.05\baselineskip
    \subfigure[Training process of Example~\ref{example:JiLQ-EXP1}]{\label{fig:JiLQ-EXP1}\includegraphics[width=0.9\textwidth]{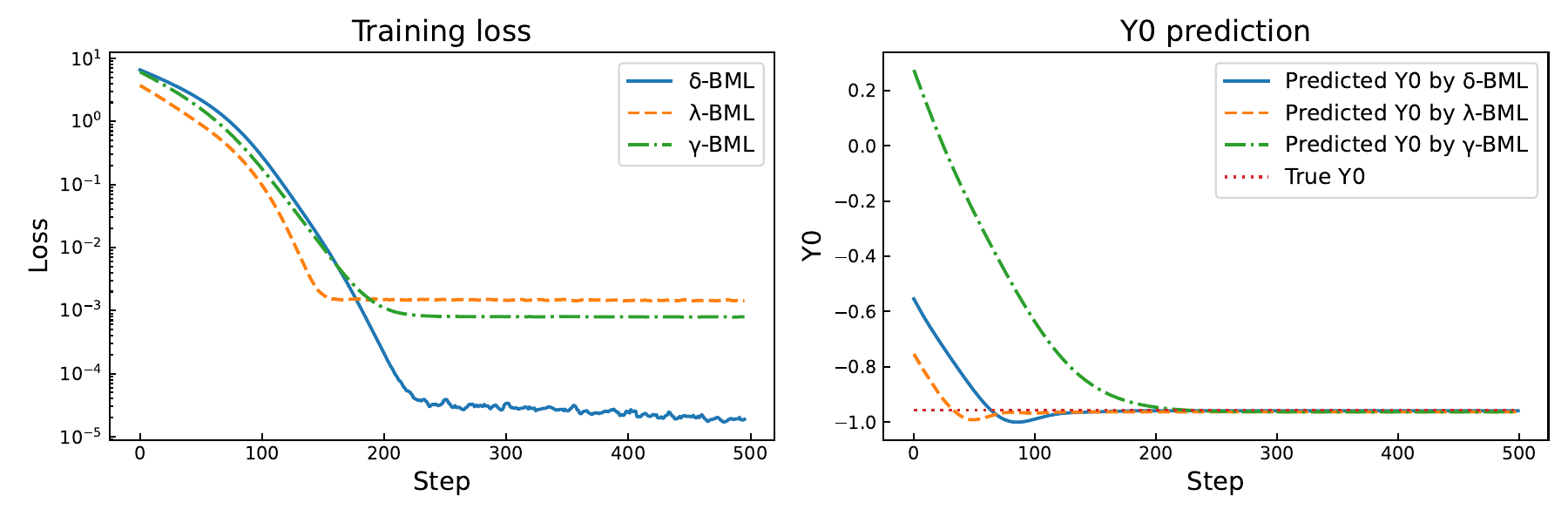}}
    \vskip.05\baselineskip
    \subfigure[Training process of Example~\ref{example:JiLQ-EXP2}]{\label{fig:JiLQ-EXP2}\includegraphics[width=0.9\textwidth]{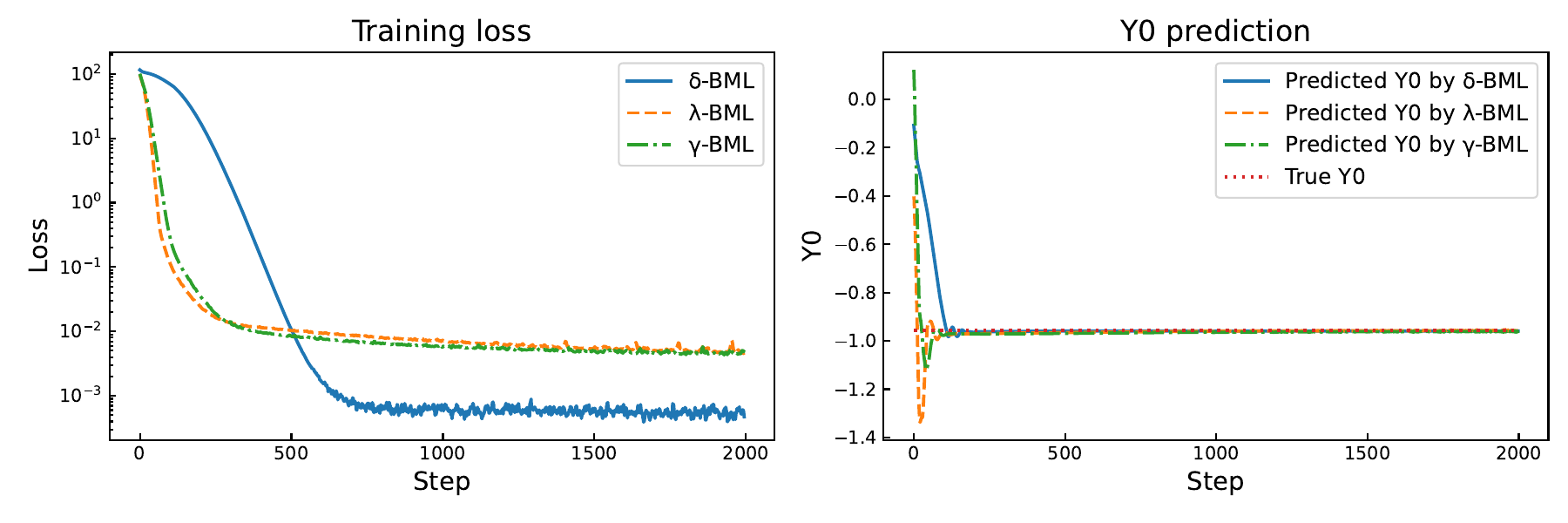}}
    \caption{Training processes for optimizing the three BML values. The $x$-axis represents the number of gradient updates.
    The $y$-axis in the two columns corresponds to the BML value and relative error of $Y_0$, respectively.}
    \label{fig:all-training-plots}
\end{figure}

\begin{table}
\centering
\caption{Results for different examples.}
\label{tab:solveFBSDE-results}
\begin{tabular}{ccrrr}
\toprule
& Objective & Predicted $Y_0$ & Relative Error \\
\hline
Example~\ref{example:FuSinCos-EXP1} & $\delta$-BML & 0.8443 & 0.33\%   \\ 
& $\lambda$-BML & 0.8352 & 0.75\% & \\ 
& $\gamma$-BML & 0.8396 & 0.24\% &  \\ \hline
Example~\ref{example:LongSin-EXP1} & $\delta$-BML & 8.2888 & 17.1\% \\ 
& $\lambda$-BML & 9.9921 & 0.08\% & \\ 
& $\gamma$-BML & 9.9232 & 0.76\% & \\ \hline
Example~\ref{example:JiLQ-EXP1} & $\delta$-BML & -0.9589 & 0.21\%   \\ 
& $\lambda$-BML & -0.9632 & 0.68\%  \\ 
& $\gamma$-BML & -0.9627 & 0.62\%  \\ \hline
Example~\ref{example:JiLQ-EXP2} & $\delta$-BML & -0.9593 & 0.25\%  \\ 
& $\lambda$-BML & -0.9558 & 0.08\%  \\ 
& $\gamma$-BML & -0.9603 & 0.37\%  \\
\bottomrule
\end{tabular}
\end{table}

\begin{example}\label{example:FuSinCos-EXP1}
  Consider the following 1-dimensional fully coupled nonlinear FBSDE
    \begin{equation*}
      \left\{\begin{aligned} X_t= & x_0 - \int_0^t
      \frac{1}{2}\sin(s+X_s)\cos(s+X_s)(Y_s^2+Z_s)\,ds \\ & + \int_0^t
      \frac{1}{2}\cos(s+X_s)(Y_s\sin(s+X_s)+Z_s+1) \,dW_s, \\ Y_t= &
      \sin(T+X_T) +\int_t^T Y_sZ_s - \cos(s+X_s) \,ds - \int_t^TZ_s\,d
      W_s,
      \end{aligned}\right.
    \end{equation*}
    which comes from the literature \cite{fu_zhao_2016,ji_peng_2020_IIS}.

    It is not difficult to verify that the solution to this equation satisfies
    relations
    \begin{equation*}
      Y_t = \sin(t+X_t),\qquad\qquad Z_t = \cos^2(t+X_t).
    \end{equation*}
    Fix the initial condition $x_0=1.0$ and the terminal time $T=1.0$. In this
    case, we have $Y_0\approx 0.841$.
\end{example}

For this example, we choose a discrete time step $h=1.0/25=0.04$; the number of
samples used for calculating the expectation in a single estimation is $M=4096$;
the fully connected neural network has 2 layers with 8 neurons in each layer;
the learning rate is set to 0.001. The optimization processes for the three BML
values are shown in Figure~\ref{fig:FuSinCos-EXP1}.

\begin{example}\label{example:LongSin-EXP1}
  Consider the following forward equation FBSDE
    \begin{equation*}
      \left\{\begin{aligned} X_t= & x_0 + \int_0^t \sigma_0 Y_s\,dW_s,
      \\ Y_t= & \frac{10}{d} \sum_{j=1}^d \sin \left(X_{j, T}\right)
      +\int_t^T-r Y_s+\frac{ \sigma_0^2}{2} e^{-3 r(T-s)}\left(
      \frac{10}{d} \sum_{j=1}^d \sin \left(X_{j,
        s}\right)\right)^3\,ds-\int_t^TZ_s\,d W_s,
      \end{aligned}\right.      
    \end{equation*}
    which is taken from the literature \cite{bender_zhang_2008,han_long_2020}.

    It can be verified by It\^o's formula that the solution to this equation
    satisfies the following relations
    \begin{equation*}
      \left\{
      \begin{aligned}
        Y_t &= \frac{10}{d} e^{-r(T-t)}\sum_{j=1}^d \sin \left(X_{j,
          s}\right),\\ Z_t &= \sigma_0 \left( \frac{10}{d} \right)^2
        e^{-2r(T-t)}\cos X_t\sum_{j=1}^d \sin \left(X_{j, s}\right).
      \end{aligned}
      \right.
    \end{equation*}
    Set the dimension as $n=d=4$ and $m=1$. Fix the initial condition $x_0= (
    \frac{\pi}{2}, \frac{\pi}{2}, \frac{\pi}{2}, \ldots,
    \frac{\pi}{2})^\intercal\in\mathbb{R}^n$ and the terminal time $T=1.0$. The
    parameters in the equation are chosen as $r=0$ and $\sigma_0=0.4$. In this
    case, we have $Y_0=10$.
\end{example}

For this example, we choose a discrete time step $h=1.0/50=0.02$; the number of samples used for calculating the expectation in a single estimation is $M=1024$; the fully connected neural network has 3 layers with 32 neurons in each layer; the learning rate is set to 0.001. The optimization processes for the three BML values are shown in Figure~\ref{fig:LongSin-EXP1}.

\begin{example}\label{example:JiLQ-EXP1}
  Consider the following stochastic linear quadratic problem
  \begin{align*}
    \min_{u_t}&\quad \operatorname{\mathbb{E}}\biggl[
      \frac{1}{2}X_T^\intercal QX_T + \int_0^T \Bigl(\frac{1}{4}\|X_t\|^2 +
      \|u_t\|^2\Bigr)\,dt \biggr],\\ \operatorname{s.t.}&\quad X_t = x_0 +
    \int_o^t(- \frac{1}{4}X_t + u_t)\,dt + \int_0^t (\frac{1}{5}X_t +
    u_t)\,dW_t,
  \end{align*}
    which is taken from the literature \cite{ji_peng_2022}.

  The dimensions are set as $n=m=5, d=1$. The initial condition is fixed at
  $x_0=\mathbf{1}_n$, a vector with all elements equal to 1 in $\mathbb{R}^n$.
  The terminal time is set to $T=0.1$. The weight matrix $Q$ is chosen as the
  identity matrix.

  The maximum principle gives the following first-order adjoint equations for this problem
  \begin{equation*}
  \left\{
  \begin{aligned}
    X_t &= x_0 + \int_0^t (- \frac{1}{4}X_t + \frac{1}{2}Y_t +
    \frac{1}{2}Z_t) \,ds + \int_0^t ( \frac{1}{5}X_t + \frac{1}{2}Y_t
    + \frac{1}{2}Z_t) \,dW_s,\\ Y_t &= -QX_T + \int_t^T(-
    \frac{1}{2}X_t - \frac{1}{4}Y_t + \frac{1}{5}Z_t)\,ds -
    \int_t^TZ_s\,dW_s.
  \end{aligned}
  \right.
  \end{equation*}
  If this system can be solved, the optimal control is $u^*=(Y^*+Z^*)/2$. By
  solving the Riccati equation for the original stochastic optimal control
  problem, it can be deduced that $Y_0\approx-0.9586 x_0$ \cite{ji_peng_2022}.
\end{example}

For this example, we choose a discrete time step $h=0.1/25=0.004$; the number of
samples used for calculating the expectation in a single estimation is $M=64$;
the fully connected neural network has 2 layers with 16 neurons in each layer;
the learning rate is set to 0.001. The optimization processes for the three BML
values are shown in Figure~\ref{fig:JiLQ-EXP1}.

\begin{example}\label{example:JiLQ-EXP2}
Consider Example~\ref{example:JiLQ-EXP1} when $n=m=100$, 
which transforms it into an optimal control problem with a state dimension of 100.
Other settings remain the same.
\end{example}

For this example, we retain the basic settings for dealing with the
corresponding low-dimensional problem, and only adjust the learning rates for
$\delta$-BML, $\lambda$-BML, and $\gamma$-BML to $5\times10^{-4}$,
$2\times10^{-3}$, and $2\times10^{-3}$, respectively. The results are shown in
Figure~\ref{fig:JiLQ-EXP2}.

From Figure~\ref{fig:all-training-plots}, it can be clearly observed that in all
the four examples, the three BML values stabilize after 2000 gradient descents, and
the predicted values $Y_0$ converge to and remain close to the reference values
after 500 gradient descents. The training results reported in
Table~\ref{tab:solveFBSDE-results} reveal that almost all relative errors reach
an accuracy of $10^{-3}$, except for the $\delta$-BML method in
example~\ref{example:LongSin-EXP1}, which fails to converge properly. We believe
that this may be due to the optimization method not successfully reducing the value
of $\delta$-BML to a small enough value, as can be observed from
Figure~\ref{fig:LongSin-EXP1}, where the final value of $\delta$-BML is an order
of magnitude larger than the other two BML values. Further analysis of the
underlying reasons awaits future research.

It is worth noting that even with an increase in state dimension from 5 to 100,
as seen from Example~\ref{example:JiLQ-EXP1} to Example~\ref{example:JiLQ-EXP2},
comparable estimation accuracy can be achieved by fine-tuning the learning rates
and increasing the number of training steps to ensure that the BML values can be
optimized to a sufficiently small value.

\section{Conclusion}

This paper proposes and studies the backward measurability loss (BML) for general coupled FBSDEs. We have analyzed the theoretical properties of the proposed BML and illustrated its physical interpretation through Picard iterations. Exploiting the property that the BML value equals zero if and only if the trial solution coincides with the equation's solution, we have developed an algorithm for numerically solving FBSDEs based on optimizing the BML value. The effectiveness of the proposed algorithm has been demonstrated through numerical experiments on two coupled nonlinear FBSDEs and two stochastic optimal control problems. 

\bibliographystyle{plainnat}
\bibliography{solveFBSDE}

\end{document}